\documentclass[12pt,a4paper,reqno]{article}
 \usepackage[left=2cm, right=2cm, top=1cm,bottom=2cm]{geometry}
\usepackage[T1]{fontenc}
\usepackage{amssymb}
\usepackage{amsmath}
\usepackage{graphicx}
\usepackage{amsthm}
\usepackage{cite}
\usepackage{color}
\usepackage{esint}
\usepackage{lmodern,soul}
\usepackage{amsfonts}
\usepackage{mathrsfs}
\usepackage{hyperref}

\newcommand{\dis}[1]{$\color{red}\star$}

\newtheorem{theorem}{\textbf{Theorem}}[section]
\newtheorem{lemma}{\textbf{Lemma}}[section]
\newtheorem{proposition}{\textbf{Proposition}}[section]
\newtheorem{corollary}{\textbf{Corollary}}[section]

\numberwithin{equation}{section}

\allowdisplaybreaks[4]

\def\non{\nonumber}

\newcommand{\RR}{\mathbb{R}}
\newcommand{\A}{\mathcal{A}}
\newcommand{\E}{\mathcal{E}}
\newcommand{\TT}{\mathbb{T}}

\newcommand{\p}{\partial}
\renewcommand{\div}{\operatorname{div}}
\newcommand{\n}{\mathbf{n}}
\newcommand{\II}{\mathbb{I}_3}

\newcommand{\ud}{\mathrm{d}}
\newcommand{\tr}{\operatorname{tr}}

 \def\[{\begin{equation}}
 \def\]{\end{equation}}

\begin{document}

\title{Contact set of solutions to  gradient flow of Landau-de Gennes energy with a singular entropy potential}

\author{
Yuning Liu\footnote{NYU Shanghai, 567 Yangsi W road, Pudong, Shanghai 200126, China, and NYU-ECNU Institute of
Mathematical Sciences at NYU Shanghai, 3663 Zhongshan Road North, Shanghai, 200062, China
  \texttt{yl67@nyu.edu}} \,
  \and
  Xiang Xu\footnote{
  Department of Mathematics and Statistics,
  Old Dominion University, Norfolk, VA 23529, USA.
  \texttt{x2xu@odu.edu}}
 }

\date{}
\maketitle
 
\begin{abstract} 
This paper investigates the convexity structure and contact set of the $Q$-tensor flow with  anisotropic Landau--de Gennes elastic energy and    a singular entropy potential $f(Q)$. The flow naturally exhibits a coercivity property, yielding the $H^1$ regularity of $f(Q)$ for almost every time. Combining this $H^1$ regularity with spherical averaging and capacity theory, the contact set
$$\mathcal{C}(Q)\triangleq \{x\mid f(Q(x))=\infty\}$$
is characterized. In particular, it is shown to be empty in two spatial dimensions and to have Hausdorff dimension at most one in three spatial dimensions.

\end{abstract}


\section{Introduction}
The Landau-de Gennes theory is a continuum theory for  nematic liquid crystals.
When formulating static or dynamic continuum theories, a crucial step is to select an order parameter that captures the microscopic
scale structure in rod-like molecule systems.  In our framework, the order parameter is a matrix-valued function that takes values in the following so-called $Q$-tensor space:
\begin{equation}\label{Q-tensor-space}
\mathcal{Q}\triangleq \left\{M\in\RR^{3\times 3}\big| \tr M=0; \,M=M^T\right\}.
\end{equation}

To establish the functional setting early on, we introduce the physical $Q$-tensor space and the fundamental norms here. Parallel to the configuration space \eqref{Q-tensor-space}, we introduce the \textit{physical} $Q$-tensor space by:
\begin{equation}\label{q-tensor}
  \mathcal{Q}_{phy} \triangleq \Big\{M\in\RR^{3\times 3}\big| \tr M=0; \,M=M^T;\,  -\frac13< \lambda_1(M)\leq \lambda_2(M)\leq \lambda_3(M)< \frac23\Big\}.
\end{equation}
Any element in $\mathcal{Q}_{phy}$ is called a physical $Q$-tensor. For any $Q, R\in\mathcal{Q}$, $Q:R$ stands for $\tr(QR)$, and $\|Q\|_{F}=\sqrt{\tr(Q^tQ)}$ represents the Frobenius norm of $Q$.   

In the computational domain $\TT^n$,  which represents the unit square in $\mathbb{R}^n$ with $n=2$ or $3$, the order parameter $Q: \TT^n\to \mathcal{Q}$ is used to describe the static or dynamic configuration of the liquid crystals material. The associated free energy functional $\E$ consists of the elastic
and the bulk parts, which reads:
\begin{equation}\label{free-energy}
\E(Q) \triangleq  \mathcal{G}(\nabla Q)+\mathcal{B}(Q)-\alpha\|Q\|^2_{L^2(\TT^n)}.
\end{equation}
Here $\mathcal{G}$ stands for the anisotropic elastic
energy that contains three quadratic terms of the gradient of $Q$:
\begin{equation}\label{elastic-energy}
\mathcal{G}(\nabla Q)\triangleq  \frac 12 \int_{\TT^n}\big(L_1|\nabla Q|^2+L_2\p_j Q_{ik}\p_k Q_{ij}+L_3\p_j Q_{ij}\p_k Q_{ik}\big)\,\ud x
\end{equation}
where $L_1, L_2, L_3$ are material-dependent constants. Here and in the sequel, $\partial_kQ_{ij}$ denotes the $k$-th spatial partial derivative of the $ij$-th component
of $Q$, and we adopt the Einstein
summation convention by summing over repeated Latin letters.
Following \cite{MR3927580}, we shall assume
\begin{equation}\label{coefficient-assumption}
L_1>0,\quad L_1>|L_2+L_3|,
\end{equation}
which ensures that \eqref{elastic-energy} fulfills the strong Legendre condition.

In \eqref{free-energy}, $\mathcal{B}(Q)$ denotes the bulk energy:
\begin{equation}\label{bulk-energy}
  \mathcal{B}(Q)\triangleq \int_{\TT^n} f(Q)\,\ud{x},
\end{equation}
where the integrand $f(Q)$ is the singular potential introduced in \cite{maiersaupe}:
\begin{equation}\label{def-singluar-potential}
f(Q)\triangleq \begin{cases}
\displaystyle\inf_{\rho\in\mathcal{A}_{Q}}\int_{\mathbb{S}^2}\rho(\n)\ln{\rho(\n)}\,\ud \n,
\quad&\mbox{if } -\frac13<\lambda_i(Q)<\frac23,\; 1\leq i\leq 3,\\
+\infty, \quad&\mbox{otherwise}.
\end{cases}
\end{equation}
In \eqref{def-singluar-potential}, $\lambda_i(Q)$ denotes the $i$-th eigenvalue of the matrix $Q$, ordered non-decreasingly, and $\mathcal{A}_{Q}$ is the admissible class defined by:
\begin{equation}\label{admissible-set}
  \mathcal{A}_{Q}=\left\{\rho(\n):\mathbb{S}^2\rightarrow \mathbb{R}_{\geq 0}\mid \,
  \|\rho\|_{L^1(\mathbb{S}^2)}=1;\ \rho(\n)=\rho(-\n);\
   \int_{\mathbb{S}^2}\big(\n\n^T-\frac13\II\big)\rho(\n)\,\ud{\n}=Q\right\}.
\end{equation}
It is noted that this singular potential \eqref{def-singluar-potential} imposes physical constraints on the eigenvalues of $Q$.
Further, $\alpha>0$ in \eqref{free-energy} is a temperature-dependent constant which characterizes the relative intensity of the molecular Brownian motion and the molecular interaction. 

The singular potential possesses several important analytic properties, which have been   studied in previous works including, among others, \cite{maiersaupe,MR3383939,ballpassage,MR3729353,MR4346781}. We refer the reader to the survey \cite{MR3729353,MR4410922} and the references therein.

This paper now turns to a rigorous study of the gradient flow generated by $\E(Q)$:
 \begin{equation}\label{grad flow E}
\partial_tQ(t,\cdot)=-\delta\E(Q(t,\cdot)), \quad\forall t>0
\end{equation}
subject to periodic boundary conditions:
\begin{equation}\label{BC-periodic}
Q(t, x+e_i)=Q(t, x), \quad\mbox{for } (t, x)\in\mathbb{R}^+\times \TT^n.
\end{equation} 
Here, $\delta\E(Q(t,\cdot))$ is the variation of $\E$, which can be written in component form as:
 \begin{align}\label{strong solu}
-(\delta\E(Q))_{ij}&= L_1\Delta{Q}_{ij}+ (L_2+L_3) \p_j\p_kQ_{ik}-\dfrac{  L_2+L_3}{3}\p_k\p_
{\ell}Q_{\ell k}\delta_{ij}\non\\
&\qquad-\frac{\partial f}{\partial{Q}_{ij}}+\frac13\tr\Big(\frac{\partial  f}{\partial{Q}}\Big)\delta_{ij}+2\alpha Q_{ij}.
\end{align}

 We begin by stating the main existence and regularity theorem for the gradient flow. 
 
\begin{theorem} \label{main-theorem-1}
Let $n=2$ or $3$. For any initial data $Q_0 \in H^1(\TT^n;\mathcal{Q}_{phy})$ with  
\[\mathcal{E}(Q_0)+\|\delta \mathcal{E}(Q_0)\|_{L^2} <\infty,\]  
there exists a unique global solution $Q(t,x):\RR^+\times\TT^n\to \mathcal{Q}_{phy}$ of \eqref{grad flow E} such that \eqref{strong solu} holds almost everywhere in $(0,T)\times \TT^n$. Moreover, for each fixed $T>0$, there holds
\begin{subequations}
\begin{align}
&Q\in L^\infty(0,T; H^2(\TT^n)) \quad \&\quad  \partial_t Q\in L^\infty(0,T; L^2(\TT^n)) \cap L^2(0,T; H^1(\TT^n)),\label{infty2-solu}\\
&\int_0^T\int_{\TT^n} \left[D^2_Q f(Q) : (\nabla Q)^{\otimes 2} +D^2_Q f(Q) : (\p_t Q)^{\otimes 2}\right] \,\ud x \ud t <\infty. \label{hessian-space}
\end{align}
\end{subequations}
 Moreover, we have 
  \begin{equation} 
 f(Q)\in L^2(0,T; H^1(\TT^n)).\label{wellposed1}
 \end{equation}
\end{theorem}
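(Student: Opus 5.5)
The plan is to regularize the singular potential, derive uniform estimates for the regularized flow, and pass to the limit. We replace $f$ by a smooth, convex, globally defined approximation $f_\varepsilon$ on $\mathcal{Q}$ (for instance a Moreau--Yosida/Lipschitz-truncated version agreeing with $f$ on $\{f\le 1/\varepsilon\}$ and with $D^2 f_\varepsilon\ge 0$). We then solve the semilinear parabolic system \eqref{strong solu} with $f_\varepsilon$ by standard Galerkin or fixed-point arguments; the strong Legendre condition \eqref{coefficient-assumption} gives parabolicity. Since $\mathcal{G}$ is coercive on $H^1$, the energy identity $\frac{d}{dt}\E_\varepsilon(Q^\varepsilon)+\|\p_tQ^\varepsilon\|_{L^2}^2=0$ applies, and together with $f_\varepsilon\ge -C$ and the $-\alpha\|Q\|^2$ term controlled by Gronwall, it yields $Q^\varepsilon\in L^\infty H^1$ and $\p_tQ^\varepsilon\in L^2L^2$ uniformly in $\varepsilon$.

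\emph{Higher-order estimates.} Differentiate the equation in time and test with $\p_tQ^\varepsilon$. The convexity of $f_\varepsilon$ gives the good term $\int D^2f_\varepsilon(Q^\varepsilon):(\p_tQ^\varepsilon)^{\otimes2}\ge0$. The Legendre condition gives $\int|\nabla\p_tQ^\varepsilon|^2$, and the term $2\alpha\|\p_t Q^\varepsilon\|^2$ is absorbed by Gronwall. The initial value is $\|\p_tQ^\varepsilon(0)\|_{L^2}=\|\delta\E_\varepsilon(Q_0)\|_{L^2}\le\|\delta\E(Q_0)\|_{L^2}$; this is where the hypothesis enters, and it requires the approximation to be chosen so that $|\partial f_\varepsilon|\le|\partial f|$ pointwise, which holds for the Yosida approximation on the relevant minimal section. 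This gives the uniform bound $\p_tQ^\varepsilon\in L^\infty L^2\cap L^2H^1$ and the time part of \eqref{hessian-space}. Next, test the equation with $-\Delta Q^\varepsilon$. Integrating by parts in the potential term gives $\int D^2f_\varepsilon:(\nabla Q^\varepsilon)^{\otimes 2}\ge 0$, which yields $L^\infty H^2$ control (via elliptic regularity for the anisotropic operator on $\TT^n$, using that $\p_tQ^\varepsilon\in L^\infty L^2$) and the spatial part of \eqref{hessian-space}. Consequently $\partial f_\varepsilon(Q^\varepsilon)$ is bounded in $L^\infty L^2$, being equal to the remaining terms of the equation.

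\emph{Passage to the limit.} Aubin--Lions gives strong convergence $Q^\varepsilon\to Q$ in $C([0,T];H^1)$ and a.e. Bounded $\|\partial f_\varepsilon(Q^\varepsilon)\|_{L^2}$ together with convexity, via the maximal monotone graph argument (Minty), identifies the weak limit as $\partial f(Q)$. This forces $Q\in\mathcal{Q}_{phy}$ a.e., since $\partial f$ blows up as an eigenvalue approaches $-1/3$ or $2/3$ (known results on $f$). Fatou applied to \eqref{hessian-space} passes the Hessian bounds to the limit, using the a.e. convergence and lower semicontinuity of the nonnegative integrands. Uniqueness follows from monotonicity: subtracting two solutions and testing with the difference, the convexity of $f$ gives $\int(\partial f(Q_1)-\partial f(Q_2)):(Q_1-Q_2)\ge0$, so Gronwall applies to the linear $2\alpha$ term.

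\emph{The bound \eqref{wellposed1}, the main obstacle.} We have $\nabla f(Q)=\partial f(Q):\nabla Q$. The bound $\partial f(Q)\in L^2$ with only $\nabla Q\in L^6$ is insufficient, so the argument must use the structure of the singular potential. The plan is to prove a pointwise inequality of the form
\[
|\partial f(Q)|^2\le C\big(1+D^2f(Q):(\xi)^{\otimes2}/|\xi|^2\big)\quad\text{uniformly in directions},
\]
or more precisely $|\partial f(Q):\xi|^2\le C\,D^2 f(Q):\xi^{\otimes 2}\,(1+|f(Q)|)$-type estimates derived from the explicit near-boundary behaviour of $f$, namely logarithmic growth $f\sim-\ln\operatorname{dist}$ with $\partial f\sim \operatorname{dist}^{-1}$ and $D^2 f\sim\operatorname{dist}^{-2}$. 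With $\xi=\nabla Q$ this gives $|\nabla f(Q)|^2\lesssim D^2f(Q):(\nabla Q)^{\otimes 2}$ up to lower-order terms, whose space-time integral is finite by \eqref{hessian-space}. Combined with $f(Q)\in L^\infty L^1$ from the energy and $f(Q)\le |\partial f(Q)||Q|+C$, this yields $f(Q)\in L^2H^1$. Establishing this Hessian-gradient inequality for the Ball--Majumdar potential is the delicate step. I would attempt it via the Legendre dual representation $f(Q)=\sup_\Lambda\{\Lambda:Q-\ln Z(\Lambda)\}$, where $D^2f=(D^2\ln Z)^{-1}$ is the inverse covariance of $\n\n^T$ under the Gibbs measure, and Cauchy--Schwarz/variance bounds give $\Lambda:\xi$ controlled by $\xi:\operatorname{Cov}^{-1}\xi$.
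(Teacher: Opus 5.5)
\textbf{Gap: the self-concordance inequality behind \eqref{wellposed1} is not proved.}

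Up to the last step your plan is the paper's: Moreau--Yosida regularization, the energy estimate, differentiating in time and testing with $\p_t Q$, testing with $-\Delta Q$ to get $L^\infty H^2$ and both Hessian integrals, then compactness and lower semicontinuity. Your Minty identification of the limit and your monotonicity proof of uniqueness are fine. The paper instead uses Fatou on $f_N(Q_N)$, and it does not write out uniqueness at all.

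The gap is in \eqref{wellposed1}, which is the real content of the theorem. What is needed is
\begin{equation*}
(D_Qf(Q):A)^2\le C\,A:D^2_Qf(Q)A\qquad \forall A\in\mathcal{Q},\ \forall Q\in\mathcal{Q}_{phy},
\end{equation*}
with $C$ independent of $Q$ all the way up to $\partial\mathcal{Q}_{phy}$. You leave exactly this unproved, and the versions you propose do not work:
\begin{itemize}
\item The ``uniform in directions'' form is false. When $\lambda_1\to-1/3$ with $\lambda_2,\lambda_3$ bounded away from $-1/3$ and from each other, $|D_Qf(Q)|\to\infty$. Yet $D^2_Qf(Q)$ stays bounded in directions that leave $\lambda_1$ unchanged, such as ${\rm diag}(0,1,-1)$.
\item The form with an extra factor $1+|f(Q)|$ only yields $|\nabla f(Q)|^2\lesssim (1+|f(Q)|)\,D^2_Qf(Q):(\nabla Q)^{\otimes 2}$. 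The bound \eqref{hessian-space} does not make this integrable, since $f(Q)$ is unbounded near the contact set.
\item The heuristic $f\sim-\ln d$ (cf.\ Proposition \ref{theorem-upper-bound}) is a zeroth-order statement. It controls neither $D_Qf$ nor $D^2_Qf$, let alone uniformly across regimes such as $\lambda_1,\lambda_2\to-1/3$ at different rates.
\end{itemize}

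The missing idea is that the uniform constant reduces to a single scalar bound. Set $M_Q=D_Qf(Q)$ and $\mathcal{H}=D^2_M\ln Z(M_Q)=(D^2_Qf(Q))^{-1}$. The Cauchy--Schwarz inequality in the $\mathcal{H}$-inner product, which you allude to, gives
\begin{equation*}
(M_Q:A)^2\le \big(M_Q:\mathcal{H}M_Q\big)\big(A:D^2_Qf(Q)A\big),\qquad M_Q:\mathcal{H}M_Q=\mathrm{Var}_{\rho_Q}\big(M_Q:\n\otimes\n\big).
\end{equation*}
So one must prove $\sup_{Q\in\mathcal{Q}_{phy}}\mathrm{Var}_{\rho_Q}(M_Q:\n\otimes\n)<\infty$. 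Equivalently, $\mathrm{Var}_{\rho_Q}(\nu_1x^2+\nu_2y^2)$ must stay bounded as the multipliers blow up. This bound must be genuinely uniform, not growing like $|f(Q)|$.

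This is Lemma \ref{Self-concordance-thm} in the paper, proved by Laplace's method:
\begin{itemize}
\item when $\nu_1,\nu_2\to\infty$, the variance tends to $1$;
\item when $\nu_1\to\infty$ with $\nu_2$ bounded, it tends to a finite limit depending on $\nu_2$.
\end{itemize}
Without this bound you do not get $\nabla f(Q)\in L^2(0,T;L^2)$, so \eqref{wellposed1} is not established.
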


 In \cite{MR4272911} a strong solution to \eqref{grad flow E}  has been  constructed, but higher-order energy estimates analogous to \eqref{hessian-space}   were not obtained. These estimates, together with the a coercivity inequality (cf. \eqref{self-concordant} below), lead to \eqref{wellposed1}, which is crucial for proving partial regularity and analyzing the contact  set of solutions.
 
 The following result, which is concerned with 
 the constraints on the size of the contact set,  does not depend on the specific PDE dynamics, but solely on the  $H^1$ regularity of $f(Q)$ and  the spatial continuity of $Q$. Therefore, we formulate it  as a  standalone theorem, which may be  of independent interest.

\begin{theorem} \label{thm-contact-set}
 Let $\Omega \subset \mathbb{R}^n$ be a bounded domain with dimension $n \in \{2,3\}$. Suppose 
\[Q \in H^2(\Omega; \overline{\mathcal{Q}_{phy}})\quad \text{ and }\quad f(Q) \in H^1(\Omega).\label{contact2}\]
 Then  the contact set  
\[\mathcal{C}(Q) \triangleq \{x\in \Omega \mid f(Q(x))=\infty\}\]
satisfies the following properties:
\begin{itemize}
    \item When $n=2$,  the contact set is   empty, $\mathcal{C}(Q) = \emptyset$.
    \item When $n=3$, then  the contact set has Hausdorff dimension at most 1. More precisely, $\mathcal{H}^{1+\epsilon}(\mathcal{C}(Q)) = 0$ for any $\epsilon > 0$.
\end{itemize}
\end{theorem}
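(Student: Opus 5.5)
The plan rests on the known logarithmic blow-up of the singular potential near the boundary of $\mathcal{Q}_{phy}$ (Ball--Majumdar): there are constants $c_1,c_2>0$ such that
\[
-c_1\ln\Big(\lambda_1(Q)+\tfrac13\Big)-c_2\;\le\; f(Q)\;\le\; -\tfrac12\ln\Big(\lambda_1(Q)+\tfrac13\Big)+c_2 .
\]
Since $Q\in H^2(\Omega)$ and $n\le 3$, Sobolev embedding gives $Q\in C^{0,\gamma}(\overline\Omega)$ for every $\gamma<1$ when $n=2$, and for $\gamma=\tfrac12$ when $n=3$. The eigenvalue $\lambda_1$ is $1$-Lipschitz in $Q$ with respect to the Frobenius norm. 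Hence $d(x)\triangleq \lambda_1(Q(x))+\tfrac13\ge 0$ is Hölder continuous with the same exponent. By the lower bound above, $\mathcal{C}(Q)=\{d=0\}$, and it is a closed set.

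Fix $x_0\in\mathcal{C}(Q)$. Then $d(x)\le C|x-x_0|^\gamma$, so $f(Q(x))\ge c_1\gamma\ln\frac1{|x-x_0|}-C$ near $x_0$. The goal is to show that this logarithmic growth is incompatible with $u\triangleq f(Q)\in H^1$, except on a set of small capacity.

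In two dimensions I would use spherical averaging. Let $\bar u(r)$ be the average of $u$ over $\partial B_r(x_0)$. By Cauchy--Schwarz in the angular variable, $\int_{r}^{R}|\bar u'(s)|^2\,2\pi s\,\ud s\le\|\nabla u\|^2_{L^2(B_R)}$. A second application of Cauchy--Schwarz then gives
\[
|\bar u(r)-\bar u(R)|\le \|\nabla u\|_{L^2(B_R\setminus B_r)}\Big(\tfrac1{2\pi}\ln\tfrac Rr\Big)^{1/2}=o\big((\ln\tfrac Rr)^{1/2}\big),
\]
because the tail of the energy tends to zero as $R\to0$. On the other hand, the pointwise bound gives $\bar u(r)\ge c\ln\frac1r-C$, which grows like the full logarithm $\ln\frac1r$, not its square root. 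This is a contradiction, so $\mathcal{C}(Q)=\emptyset$.

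In three dimensions the same averaging only gives $|\bar u(r)|\lesssim r^{-1/2}$, which does not contradict logarithmic growth, so I would switch to capacity. Because $u\in H^1$, the set $E=\{x: \limsup_{r\to0}\fint_{B_r(x)}|u|=\infty\}$ has $\mathrm{Cap}_{1,2}(E)=0$. By the standard comparison between Bessel capacity and Hausdorff measure, this gives $\mathcal{H}^{s}(E)=0$ for every $s>n-2=1$. Every $x_0\in\mathcal{C}(Q)$ lies in $E$, because the averages of $u$ on $B_r(x_0)$ are bounded below by $c\ln\frac1r-C\to\infty$. Therefore $\mathcal{H}^{1+\epsilon}(\mathcal{C}(Q))=0$.

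The main obstacle is the blow-up estimate for $f$ near the boundary of $\mathcal{Q}_{phy}$, in particular its uniformity in the degenerate cases where two or three eigenvalues coincide. I would take it as the cited result of Ball--Majumdar, with an explicit lower constant $c_1$. If only the weaker statement $f\to\infty$ as $d\to0$ were available, the two-dimensional argument would lose its quantitative growth rate and the contradiction would not follow.
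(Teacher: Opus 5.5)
Your two-dimensional argument is essentially the paper's. Hölder continuity gives $\lambda_1(Q)+\frac13\le C|x-x_0|^\gamma$. The logarithmic lower bound \eqref{lower-bound-new} then forces $\bar w(r)\gtrsim|\log r|$, while the $H^1$ bound on circles only allows $\sqrt{|\log r|}$ growth.

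In three dimensions you take a genuinely different route. You apply the quasi-everywhere Lebesgue point theorem to $u=f(Q)$ itself, after a cutoff so that $u\in H^1(\mathbb{R}^3)$. You observe that the ball averages of $u$ diverge at every contact point, and conclude from $\mathrm{Cap}_2(E)=0\Rightarrow\mathcal{H}^{1+\epsilon}(E)=0$. This is correct and short, and it needs no rate at all: continuity of $Q$ and $f\to\infty$ at the boundary of $\mathcal{Q}_{phy}$ suffice.

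The paper instead keeps the spherical averaging and runs it dyadically. Let $U_4$ be the set of points where $\limsup_{r\to0}r^{-1}\int_{B_r}|\nabla f(Q)|^2>0$; it is $\mathcal{H}^1$-null by the elementary density Lemma \ref{lin-ref}. Off $U_4$, each dyadic shell contributes only $o(1)$ to $\bar w$, so $\bar w(r)=o(|\log r|)$, contradicting $\bar w(r)\ge\frac14|\log r|-C$. This is precisely the refinement of the crude $r^{-1/2}$ bound you set aside.

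The paper also intersects with Lebesgue-point sets $U_1,U_2$ of $\nabla Q$, controlled via capacity, but these play no role in the contradiction. Its argument therefore actually yields $\mathcal{C}(Q)\subset U_4$, i.e. $\mathcal{H}^1(\mathcal{C}(Q))=0$. Capacity alone cannot reach this, since a line segment already has zero $2$-capacity in $\mathbb{R}^3$. So the paper's route pays for the quantitative logarithmic rate and gets a sharper conclusion; yours gives up that sharpening in exchange for brevity and robustness.

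One correction: the upper bound you quote, $f\le-\frac12\ln(\lambda_1+\frac13)+c_2$, is false near the uniaxial corner $\lambda_1,\lambda_2\to-\frac13$. There $f=-\frac12\ln(\lambda_1+\frac13)-\frac12\ln(\lambda_2+\frac13)+O(1)$ by Proposition \ref{theorem-upper-bound}. You never use it, and $\mathcal{C}(Q)=\{\lambda_1(Q)=-\frac13\}$ follows directly from the definition of $f$ together with $Q\in\overline{\mathcal{Q}_{phy}}$.
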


As a consequence of these two theorems, we have the following improved result on the contact set in \cite[Theorem 1.3]{MR4272911}. 
 \begin{corollary}
 Let $Q(t, x)$ be the unique   solution   established in Theorem \ref{main-theorem-1}. Then 
 for a.e. $t \in\left(0, T\right)$, the contact set
has the following properties:
\begin{itemize}
\item $\mathcal{C}(Q(t,\cdot))=\emptyset$ for $n=2$.
\item $\mathcal{C}(Q(t,\cdot))$ has Hausdorff dimension at most 1 for $n=3$.
\end{itemize}
 \end{corollary}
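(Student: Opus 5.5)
The plan is to obtain the corollary by applying Theorem \ref{thm-contact-set} time-slice by time-slice. The only real work is to check that for almost every $t$ the slice $Q(t,\cdot)$ meets the hypotheses \eqref{contact2}, and to move from the torus $\TT^n$ to a bounded Euclidean domain.

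\emph{Step 1: a good set of times.} By \eqref{infty2-solu}, $Q\in L^\infty(0,T;H^2(\TT^n))$, so there is a null set $N_1\subset(0,T)$ with $Q(t,\cdot)\in H^2(\TT^n)$ for $t\notin N_1$. By \eqref{wellposed1}, $f(Q)\in L^2(0,T;H^1(\TT^n))$, so there is a null set $N_2$ with $f(Q(t,\cdot))\in H^1(\TT^n)$ for $t\notin N_2$.

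The solution from Theorem \ref{main-theorem-1} takes values in $\mathcal{Q}_{phy}\subset\overline{\mathcal{Q}_{phy}}$ almost everywhere in $(0,T)\times\TT^n$. By Fubini, there is a null set $N_3$ such that for $t\notin N_3$ we have $Q(t,x)\in\overline{\mathcal{Q}_{phy}}$ for a.e. $x$.

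Since $\partial_tQ\in L^2(0,T;L^2)$, the map $t\mapsto Q(t)\in L^2$ is continuous. Hence the slices are well defined, and these a.e.-in-$t$ statements refer to the same functions. Set $G=(0,T)\setminus(N_1\cup N_2\cup N_3)$, which has full measure.

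\emph{Step 2: from $\TT^n$ to a bounded domain.} Fix $t\in G$ and extend $Q(t,\cdot)$ periodically to $\RR^n$. Let $\Omega=(-1,2)^n$. The periodic extension lies in $H^2(\Omega)$, and $f$ of the extension lies in $H^1(\Omega)$, because both are finite unions of translates of the periodic cell glued across faces where periodic Sobolev functions match.

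For $n\le 3$ we have $H^2\hookrightarrow C^{0,\gamma}$. We therefore use the continuous representative of $Q(t,\cdot)$. Since $\overline{\mathcal{Q}_{phy}}$ is closed, this representative takes values in $\overline{\mathcal{Q}_{phy}}$ everywhere, so $\mathcal{C}(Q(t,\cdot))$ is defined pointwise, consistent with its use in Theorem \ref{thm-contact-set}.

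\emph{Step 3: apply Theorem \ref{thm-contact-set}.} Under the identification of $\TT^n$ with $[0,1)^n\subset\Omega$, the contact set $\mathcal{C}(Q(t,\cdot))\subset\TT^n$ is a subset of the contact set of the extension in $\Omega$.

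For $n=2$, the theorem gives that the latter set is empty, so $\mathcal{C}(Q(t,\cdot))=\emptyset$.

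For $n=3$, the theorem gives $\mathcal{H}^{1+\epsilon}=0$ for the latter set, for every $\epsilon>0$. Hausdorff measure is monotone and local, and the quotient map $\RR^3\to\TT^3$ is a local isometry. Hence $\mathcal{H}^{1+\epsilon}(\mathcal{C}(Q(t,\cdot)))=0$ for every $\epsilon>0$, that is, the Hausdorff dimension is at most one.

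This argument is routine. The only point needing care is the choice of representatives in Steps 1--2: the slice must be taken as the continuous $H^2$ representative, and the exceptional time sets from \eqref{infty2-solu} and \eqref{wellposed1} must be combined, as done above. All the substantive difficulty sits in Theorem \ref{main-theorem-1} and Theorem \ref{thm-contact-set}, which are assumed here.
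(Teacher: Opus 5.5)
Your proposal is correct and follows the same route as the paper, which obtains the corollary directly by applying Theorem \ref{thm-contact-set} to the time slices $Q(t,\cdot)$, using \eqref{infty2-solu} and \eqref{wellposed1} for almost every $t$. Your extra care with the continuous representative, and with passing from $\TT^n$ to a bounded domain by periodic extension, fills in details the paper leaves implicit.
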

 The investigation of the contact set for variational problems involving singular energy functionals of Landau–de Gennes type originated in the work of Evans--Kneuss--Hung \cite{MR3451881} (see also \cite{MR4073203}). There, the authors considered a general elastic energy density $\mathcal{G}(\nabla Q, Q)$ that is uniformly quasiconvex with respect to $\nabla Q$, establishing partial regularity for energy minimizers. In the special case where $\mathcal{G}$ is strictly convex and depends solely on $\nabla Q$—which aligns closely with the setup considered in the present work (cf. \eqref{elastic-energy})—they obtained $H^2$-regularity for energy minimizers and established the Hausdorff measure bound $\mathcal{H}^{n-2+\epsilon}(\mathcal{C}(Q))=0$, under the additional hypothesis that
 \begin{equation}\label{evans}f(Q) \geq \frac{\gamma}{\operatorname{dist}(Q, \partial \mathbb{K})^2}, \quad (Q \in \mathbb{K}).\end{equation}
 Here  $\mathbb{K}$ denotes the physical domain such as   $\mathcal{Q}_{\text{phy}}$ defined in \eqref{q-tensor}. However, the physical singular potential \eqref{def-singluar-potential} exhibits a logarithmic divergence near $\partial \mathbb{K}$, which is significantly weaker than the polynomial blow-up required by \eqref{evans}. To extend their framework to more physically realistic potentials, the authors in \cite{MR3451881} proposed assuming the following self-concordant-like coercivity inequality for $f$:
 $$C\vert{}y\vert{}^2 + y^T D^2 f(Q) y \geq \gamma \vert{}D f(Q) \cdot y\vert{}^2, \quad (Q \in \mathbb{K}, \; y \in \mathbb{R}^k).$$
  In  \cite{MR3383939},  the authors  have   successfully established  that the singular logarithmic potential \eqref{def-singluar-potential} indeed satisfies this coercivity inequality through a careful  analysis of the hessian of $f$. 
  In Lemma \ref{Self-concordance-thm} below, we employ a dual formulation to provide a new proof of this inequality, featuring a more streamlined computation of the integrals.
  Leveraging this key coercivity inequality, we are able to generalize the contact set estimate $\mathcal{H}^{n-2+\epsilon}(\mathcal{C}(Q))=0$ to the dynamic evolutionary system, where static variational techniques for energy minimizers are no longer applicable.
 
 
 \section{Blow-up rate and convexity of $f$}
 
 We    recall   a few  existing  results for the singular potential $f$ defined in \eqref{def-singluar-potential}.

\begin{proposition}\label{maiersaupe1}
\cite{maiersaupe}
Given $Q\in \mathcal{Q}_{phy}$, there exist unique $\mu_1,\mu_2,\mu_3$ such that the optimal density function $\rho_Q\in\A_{Q}$ that solves the minimizing problem defining $f(Q)$ in \eqref{def-singluar-potential} is given by 
\[\begin{aligned}\label{Boltzmann-distribution}
&\rho_Q(x,y,z)=\dfrac{\exp(\mu_1x^2+\mu_2y^2+\mu_3z^2)}{Z(\mu_1,\mu_2,\mu_3)}, \quad(x,y,z)\in\mathbb{S}^2 ,\\&\quad\text{ with }\mu_1+\mu_2+\mu_3=0.
\end{aligned}\]
Here in \eqref{Boltzmann-distribution}, $Z(\mu_1,\mu_2,\mu_3)$ is the partition function given by
\begin{equation}\label{partition6}
Z(\mu_1,\mu_2,\mu_3)=\int_{\mathbb{S}^2 }\exp(\mu_1x^2+\mu_2y^2+\mu_3z^2)\,\ud{S},
\end{equation}
which satisfies the moment condition:
\begin{equation}\label{second-moment-original}
\frac{1}{Z}\frac{\partial Z}{\partial\mu_i}=\int_{\mathbb{S}^2} n_i^2 \rho_Q(\mathbf{n}) \, \ud\mathbf{n} =\lambda_i+\frac13, \qquad 1\leq i\leq 3.
\end{equation}
\end{proposition}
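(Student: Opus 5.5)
The plan is to prove the proposition by convex duality, using a Gibbs-type inequality, rather than by a direct Euler--Lagrange argument on the constrained entropy problem. First, I will reduce to diagonal $Q$. Write $Q=R\,\mathrm{diag}(\lambda_1,\lambda_2,\lambda_3)R^T$ with $R\in SO(3)$. The change of variables $\rho(\n)\mapsto\rho(R\n)$ is a bijection between $\A_Q$ and $\A_{\mathrm{diag}(\lambda)}$ that preserves $\int\rho\ln\rho$. So it suffices to treat $Q=\mathrm{diag}(\lambda_1,\lambda_2,\lambda_3)$, with coordinates $(x,y,z)$ on $\mathbb{S}^2$ aligned to the eigenvectors. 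Throughout, set $\mathbf{q}=(\lambda_1+\frac13,\lambda_2+\frac13,\lambda_3+\frac13)$. Since $Q\in\mathcal{Q}_{phy}$, $\mathbf{q}$ lies in the \emph{open} simplex $\Delta=\{q_i>0,\ \sum q_i=1\}$.

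\textbf{Step 1 (Gibbs inequality and uniqueness).} For $\mu$ in the plane $P=\{\mu_1+\mu_2+\mu_3=0\}$, let $\rho_\mu$ be the density in \eqref{Boltzmann-distribution}. It is even in $\n$ and has unit mass. For any $\rho\in\A_{\mathrm{diag}(\lambda)}$, Jensen's inequality (equivalently, nonnegativity of relative entropy) gives
\begin{equation*}
\int_{\mathbb{S}^2}\rho\ln\rho\,\ud\n\;\ge\;\int_{\mathbb{S}^2}\rho\ln\rho_\mu\,\ud\n\;=\;\mu\cdot\mathbf{q}-\ln Z(\mu),
\end{equation*}
with equality iff $\rho=\rho_\mu$. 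The equality in the middle uses only the diagonal moments $\int n_i^2\rho=q_i$, which are part of the constraint. Hence, if some $\mu^*\in P$ satisfies the moment condition \eqref{second-moment-original}, then $\rho_{\mu^*}\in\A_Q$ and $\rho_{\mu^*}$ attains the infimum. Strict convexity of $s\mapsto s\ln s$ on the convex set $\A_Q$ then gives uniqueness of the minimizer. Uniqueness of $\mu^*$ follows from Step 2.

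\textbf{Step 2 (solving the moment equation).} Consider $\Phi(\mu)=\ln Z(\mu)-\mu\cdot\mathbf{q}$ on $P$. Its gradient is $\big(\frac1Z\partial_{\mu_i}Z-q_i\big)_i$, projected onto $P$. Its Hessian is the covariance matrix of $(x^2,y^2,z^2)$ under $\rho_\mu$. Restricted to $P$, this covariance is positive definite: if $\xi\in P\setminus\{0\}$, then $\xi_1x^2+\xi_2y^2+\xi_3z^2$ is not constant on $\mathbb{S}^2$, because the only affine relation among $x^2,y^2,z^2$ on the sphere is $x^2+y^2+z^2=1$. Thus $\Phi$ is strictly convex on $P$, and any critical point is unique. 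One subtlety: the component of the gradient normal to $P$ is automatically consistent, since $\sum_i\frac1Z\partial_iZ=1=\sum_i q_i$. Therefore a critical point of $\Phi|_P$ satisfies \eqref{second-moment-original} in full.

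\textbf{Step 3 (coercivity; the main obstacle).} It remains to show that $\Phi(\mu)\to\infty$ as $|\mu|\to\infty$ in $P$, which yields a minimizer. Write $\mu=t\nu$ with $|\nu|=1$ and $t\to\infty$. By Laplace's method (more crudely, by integrating over a small cap around a maximizer), $\frac1t\ln Z(t\nu)\to\max_{\mathbb{S}^2}(\nu_1x^2+\nu_2y^2+\nu_3z^2)=\max_i\nu_i$. This convergence is uniform in $\nu$ on the compact unit circle of $P$. Moreover $\nu\cdot\mathbf{q}<\max_i\nu_i$ strictly, since $\mathbf{q}$ has all entries positive and $\nu$ is non-constant. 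By compactness, $\max_i\nu_i-\nu\cdot\mathbf{q}\ge c>0$ uniformly in $\nu$. Together these give $\Phi(t\nu)\ge \frac{c}{2}t$ for large $t$, uniformly in $\nu$, which is the required coercivity.

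The coercivity is exactly where the open constraints $-\frac13<\lambda_i<\frac23$ enter; on the boundary of $\Delta$ the minimum escapes to infinity. This uniform Laplace-type lower bound is the step needing the most care. I would carry it out by bounding $Z(t\nu)$ from below by an integral over the cap $\{n_{i^*}^2\ge 1-\delta\}$, where $i^*$ attains $\max_i\nu_i$. This gives $\ln Z\ge t(\max_i\nu_i-\delta)+\ln|{\rm cap}_\delta|$. Choosing $\delta=c/2$ then yields the bound directly, avoiding full asymptotics.
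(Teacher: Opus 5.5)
Your proof is correct, and it differs from what the paper does: the paper gives no proof of this proposition and simply imports it from the cited reference. The classical proof of this fact works on the primal side. One gets a minimizer of the entropy on $\mathcal{A}_Q$ by the direct method in $L^1$, shows it is positive a.e., and then derives the exponential form from the Euler--Lagrange equation with Lagrange multipliers.

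The closest material in the paper is the Dual formulation lemma and the Bingham-closure corollary, but there the logic runs the other way. The paper takes the multipliers from this proposition as given and uses \eqref{entropy-exact} to identify $f(Q)$ with $J$ at that point. It then uses the same covariance argument as your Step 2 to show this critical point is the global maximizer.

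Your scheme is the finite-dimensional maximum-entropy argument: weak duality from the Gibbs inequality, strict convexity of $\Phi$ on the trace-free plane, and coercivity of $\Phi$ to produce a critical point. What it buys:
\begin{itemize}
\item It avoids infinite-dimensional compactness and the positivity and multiplier justification.
\item It isolates where the open constraints $\lambda_i>-\frac13$ are used, namely only in Step 3.
\item It proves the dual formula \eqref{sup-problem} at the same time. Step 1 gives $f(Q)\ge M:Q-\ln Z(M)$ for every $M$ directly, and Step 3 supplies existence of the dual optimizer, which the paper's lemma simply presupposes.
\end{itemize}
The primal route does not require guessing the Gibbs form in advance, but it needs more functional-analytic care.

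Two small details to tidy up:
\begin{itemize}
\item To conclude $\rho_{\mu^*}\in\mathcal{A}_Q$ you need the full tensor constraint, not only the diagonal moments. The off-diagonal entries $\int n_in_j\rho_{\mu^*}\,\ud\n$, $i\neq j$, vanish because $\rho_{\mu^*}$ is invariant under $n_i\mapsto -n_i$ while $n_in_j$ is odd.
\item On the cap $\{n_{i^*}^2\ge 1-\delta\}$ you only get $\sum_j\nu_jn_j^2\ge\max_i\nu_i-(\max_i\nu_i-\min_i\nu_i)\delta\ge\max_i\nu_i-\sqrt{2}\,\delta$ for unit $\nu$. So take $\delta=c/(2\sqrt{2})$ rather than $c/2$; the bound $\Phi(t\nu)\ge \frac{c}{2}t-C$ is unaffected.
\end{itemize}
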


We also need   a monotonicity property between the eigenvalues of $Q$ and the associated Lagrange multipliers.

\begin{proposition}\cite{MR4346781}\label{lemma-orders-mu}
For any physical $Q$-tensor \eqref{Q-diagonal}, its optimal probability density $\rho_Q$ defined in \eqref{Boltzmann-distribution} satisfies
$$
  \mu_1\leq\mu_2\leq\mu_3.
$$
Moreover, $\mu_i=\mu_j$ provided $\lambda_i=\lambda_j$ for $1\leq i\neq j\leq 3$.
\end{proposition}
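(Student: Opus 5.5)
The plan is a symmetrization argument that compares the eigenvalues directly through the moment condition \eqref{second-moment-original}. By rotational invariance of $f$ and of the admissible class $\mathcal{A}_Q$, I may take $Q=\mathrm{diag}(\lambda_1,\lambda_2,\lambda_3)$ with $\lambda_1\le\lambda_2\le\lambda_3$. By Proposition \ref{maiersaupe1}, the optimal density is $\rho_Q=Z^{-1}\exp(\mu_1x^2+\mu_2y^2+\mu_3z^2)$, and the multipliers $\mu_i$ are uniquely determined. Subtracting two instances of \eqref{second-moment-original}, for instance for $i=1,2$, gives
\begin{equation*}
\lambda_1-\lambda_2=\frac1Z\int_{\mathbb{S}^2}(x^2-y^2)\,e^{\mu_1x^2+\mu_2y^2+\mu_3z^2}\,\ud\n .
\end{equation*}
The same identity holds for any pair $(i,j)$.

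The key step is to symmetrize this integral under the reflection $(x,y,z)\mapsto(y,x,z)$. This map preserves $\mathbb{S}^2$ and its surface measure, and it changes the sign of $x^2-y^2$. Averaging the integral with its reflected copy gives
\begin{equation*}
\lambda_1-\lambda_2=\frac1{2Z}\int_{\mathbb{S}^2}(x^2-y^2)\Big(e^{\mu_1x^2+\mu_2y^2}-e^{\mu_1y^2+\mu_2x^2}\Big)e^{\mu_3z^2}\,\ud\n .
\end{equation*}
The two exponents differ by $(\mu_1-\mu_2)(x^2-y^2)$. Since the exponential is strictly increasing, the bracket has the sign of $(\mu_1-\mu_2)(x^2-y^2)$. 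The integrand therefore has the sign of $(\mu_1-\mu_2)(x^2-y^2)^2$.

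It follows that $\lambda_1-\lambda_2$ has exactly the sign of $\mu_1-\mu_2$. If $\mu_1\neq\mu_2$, the integrand is nonzero on the open set where $x^2\neq y^2$, so the integral is nonzero. In particular, $\lambda_1-\lambda_2$ is strictly positive, zero, or strictly negative according as $\mu_1-\mu_2$ is.

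Applying this to each pair $(i,j)$ yields both claims. First, $\lambda_i\le\lambda_j$ forces $\mu_i\le\mu_j$, since $\mu_i>\mu_j$ would give $\lambda_i>\lambda_j$. Second, $\lambda_i=\lambda_j$ forces $\mu_i=\mu_j$. I do not expect a genuine obstacle. The only points needing care are these: the reflection must preserve the measure on $\mathbb{S}^2$, the sign must be strict (which uses that $\{x^2\neq y^2\}$ has positive measure), and the reduction to diagonal $Q$ relies on the uniqueness in Proposition \ref{maiersaupe1}, so that the multipliers are well defined.
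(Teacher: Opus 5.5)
Your proof is correct and complete. The paper gives no argument of its own here; it simply quotes the result from \cite{MR4346781}. Your reflection argument is therefore a self-contained alternative that uses only the Gibbs form \eqref{Boltzmann-distribution} and the moment identities \eqref{second-moment-original}. After swapping $n_i\leftrightarrow n_j$ and averaging, the integrand has the sign of $(\mu_i-\mu_j)(n_i^2-n_j^2)^2$. When $\mu_i\neq\mu_j$ it is nonzero on the open set $\{n_i^2\neq n_j^2\}$, and that is all that is needed.

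Beyond the statement as written, your route gives the two-sided equivalence $\operatorname{sgn}(\lambda_i-\lambda_j)=\operatorname{sgn}(\mu_i-\mu_j)$, strict inequalities included. The paper needs exactly this later, in the dual-formulation lemma, when it asserts $\nu_1=\mu_3-\mu_1>0$ for $Q\neq 0$. From the proposition as stated, that requires an extra step: if $\mu_1=\mu_3$, the ordering and zero trace force all $\mu_i=0$, so $\rho_Q$ is uniform and $Q=0$. In your version it is immediate.

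The uniqueness caveat you raise is harmless. Only the differences $\mu_i-\mu_j$ enter the argument, and these are determined by $\rho_Q$ alone, because on $\mathbb{S}^2$ the functions $1,x^2,y^2,z^2$ satisfy only the single linear relation $x^2+y^2+z^2=1$.
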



%

 \begin{proposition}\cite{MR4346781}\label{theorem-upper-bound}
On the domain $ \mathcal{Q}_{phy}$ \eqref{q-tensor}, the functional $f$ defined in \eqref{def-singluar-potential} is bounded above by
\begin{equation}\label{upper-bound-new}
f(Q)\leq -\frac12\ln\Big(\lambda_1(Q)+\frac13\Big)-\frac12\ln\Big(\lambda_2(Q)+\frac13\Big)-\ln{8\sqrt{3}}.
\end{equation}
Moreover, there exists a small computable constant $\delta_0>0$, such that, whenever  $\lambda_2(Q)+1/3<\delta_0$, it holds
\begin{equation}\non
f(Q)\geq -\frac12\ln\Big(\lambda_1(Q)+\frac13\Big)-\frac12\ln\Big(\lambda_2(Q)+\frac13\Big) - 1 - 3\ln\pi. 
\end{equation}
\end{proposition}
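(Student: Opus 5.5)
Both bounds come from the variational and dual descriptions of $f$. For the upper bound \eqref{upper-bound-new} I will plug explicit competitors $\rho\in\mathcal{A}_Q$ into \eqref{def-singluar-potential}. For the lower bound I will use the dual inequality
$$f(Q)\ \ge\ \sum_i\mu_i\Big(\lambda_i+\tfrac13\Big)-\ln Z(\mu_1,\mu_2,\mu_3)\quad\text{for every }\mu\in\RR^3 .$$
This inequality follows from Gibbs' inequality $\int\rho\ln(\rho/\sigma)\ge 0$ with $\sigma=e^{\mu\cdot(x^2,y^2,z^2)}/Z$. It also agrees with Proposition \ref{maiersaupe1}, where the optimal $\mu$ gives equality. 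Since $x^2+y^2+z^2=1$ on $\mathbb{S}^2$, adding a constant to all $\mu_i$ changes both terms by the same amount, so the condition $\sum\mu_i=0$ can be dropped.

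By rotation invariance of $f$, I take $Q=\mathrm{diag}(\lambda_1,\lambda_2,\lambda_3)$ and write $a=\lambda_1+\frac13\le b=\lambda_2+\frac13$.

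\emph{Upper bound, small eigenvalues.} Parametrize the two polar caps around $\pm e_3$ by $(x,y)$, so that $\ud S=\ud x\,\ud y/z$. Let $s^2=3a$ and $t^2=3b$, and set
$$\rho=\frac{z}{8st}\,\mathbf 1_{\{|x|<s,\ |y|<t\}}\quad\text{on both caps}.$$
This is admissible whenever $s^2+t^2\le 1$, i.e.\ $a+b\le\frac13$. The factor $z$ cancels the area element, and one checks directly that:
\begin{itemize}
\item $\rho$ is even and has total mass $1$;
\item $\int x^2\rho=s^2/3=a$ and $\int y^2\rho=b$, hence $\int z^2\rho=1-a-b$;
\item all off-diagonal moments vanish by symmetry.
\end{itemize}
So $\rho\in\mathcal{A}_Q$. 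Since $z\le1$,
$$\int\rho\ln\rho\le-\ln(8st)=-\tfrac12\ln a-\tfrac12\ln b-\ln 24\le -\tfrac12\ln a-\tfrac12\ln b-\ln(8\sqrt3).$$

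\emph{Upper bound, remaining range.} When $a+b>\frac13$, I will take convex combinations $\theta\rho_{\rm rect}+(1-\theta)\rho_{\rm other}$. Here $\rho_{\rm other}$ is the uniform density $1/(4\pi)$, or a rectangle density centred at $\pm e_1$ or $\pm e_2$. Moments combine linearly, and convexity of $t\ln t$ gives entropy at most $\theta H_1+(1-\theta)H_2$. On this compact region the right-hand side of \eqref{upper-bound-new} is bounded below by a constant close to $\ln(\sqrt3/8)$ (attained at $a=b=\tfrac13$), which exceeds $f(0)=-\ln 4\pi$. So it suffices to show the mixture entropy stays below it.

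The main obstacle I expect is this constant bookkeeping: choosing $\theta$ and the auxiliary density so that the worst case over the region is controlled by $\ln(8\sqrt3)$. The constant $8\sqrt3$, rather than $24$, is presumably what this matching costs. If a direct mixture fails near the corner where $b\approx c$, I would instead use an elliptical-disk competitor with $s^2+t^2\le1$ automatically, which gives a slightly worse constant.

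\emph{Lower bound.} Assume $b<\delta_0$, so $a\le b<\delta_0$. Choose $\mu=(-\frac1{2a},-\frac1{2b},0)$, which gives $\sum\mu_i(\lambda_i+\frac13)=-1$. Then
$$Z=\int_{\mathbb{S}^2}e^{-x^2/2a-y^2/2b}\,\ud S .$$
Split the sphere at $z=\frac12$.
\begin{itemize}
\item On the two caps $\{|z|\ge\frac12\}$, use $\ud S=\ud x\,\ud y/|z|$ and enlarge the domain to the plane. This region contributes at most a Gaussian integral, $2\cdot 2\pi\sqrt{ab}\,(1+O(\sqrt b))$; the factor $1/|z|=1+O(x^2+y^2)$ is absorbed because $x^2+y^2=O(b)$ under the Gaussian.
\item On the band $\{|z|<\frac12\}$ one has $x^2+y^2\ge\frac34$. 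There the integrand is at most $e^{-3/(8b)}$, which is $\le\sqrt{ab}$ once $\delta_0$ is small, since $a,b<\delta_0$.
\end{itemize}
Hence $\ln Z\le\frac12\ln(ab)+\ln(4\pi)+\ln 2$ for $\delta_0$ small. This yields
$$f(Q)\ge-\tfrac12\ln a-\tfrac12\ln b-1-\ln(8\pi)\ge-\tfrac12\ln a-\tfrac12\ln b-1-3\ln\pi .$$
Making the smallness of $\delta_0$ explicit in the two error terms gives the computable constant.
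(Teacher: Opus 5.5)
\textbf{Upper bound: genuine gap.} The paper does not prove \eqref{upper-bound-new} at all. It is quoted from \cite{MR4346781}, and the paper notes that nothing later uses it.

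Your rectangle competitor is correct. It gives the bound on $\{a+b\le\tfrac13\}$, even with $24$ in place of $8\sqrt3$.

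Beyond that set you have a plan, not a proof, and the plan misreads the difficulty. The set $\{a+b>\tfrac13\}$ is not compact in $\mathcal{Q}_{phy}$. It contains points with $a\to0$ and $b\in(\tfrac13,\tfrac12]$, where the right-hand side (and $f$ itself) tends to $+\infty$. Comparing with $f(0)=-\ln(4\pi)$ is therefore beside the point there. In that regime your fallbacks fail:
\begin{itemize}
\item The uniform density and your $\pm e_3$ rectangle both have $\langle y^2\rangle\le\tfrac13<b$, so no mixture of the two is admissible.
\item The uniform elliptical disk with the right moments has semi-axes $2\sqrt a,2\sqrt b$. It fits in the unit disk only if $b\le\tfrac14$, so it is worse than the rectangle, not better.
\end{itemize}
Of the options you list, only a rectangle centred at $\pm e_2$ and thin in $x$ can work. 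For example, mix your $\pm e_3$ rectangle ($|x|<\sqrt{3a}$, $|y|<1/\sqrt2$) with the $\pm e_2$ rectangle ($|x|<\sqrt{3a}$, $|z|<1/\sqrt2$), choosing $\theta$ to match $\langle y^2\rangle=b$. By convexity the entropy is at most $-\tfrac12\ln(3a)-\ln8+\tfrac12\ln2$. This lies below the right-hand side because $b\le\tfrac12$, and it covers all remaining points with $a\le\tfrac16$. The compact set $\{a\ge\tfrac16\}$ still needs its own explicit competitor with tracked constants. As it stands, \eqref{upper-bound-new} is proved only on $\{a+b\le\tfrac13\}$.

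\textbf{Lower bound: same route, one false step.} Your argument is essentially the paper's. Deriving weak duality from Gibbs' inequality is cleaner than passing through the optimal multipliers. Your choice $\mu=(-\tfrac1{2a},-\tfrac1{2b},0)$ is exactly the paper's $\nu_i=1/(2\delta_i)$.

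The band step, however, fails as written. On $\{|z|<\tfrac12\}$ you bound the integrand by $e^{-3/(8b)}$ and claim $e^{-3/(8b)}\le\sqrt{ab}$ once $\delta_0$ is small. But $a$ is not bounded below in terms of $b$. Fix $b<\delta_0$ and let $a\to0$, which is precisely the regime near the contact set. Then $\sqrt{ab}\to0$ while $2\pi e^{-3/(8b)}$ does not, so $Z\le 8\pi\sqrt{ab}$ does not follow.

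The fix is to keep the Gaussian factor in $x$:
\begin{itemize}
\item On the band, $a\le b$ and $x^2+y^2>\tfrac34$ give $\tfrac{x^2}{2a}+\tfrac{y^2}{2b}\ge\tfrac{x^2}{4a}+\tfrac{3}{16b}$.
\item By Archimedes, $\int_{\mathbb{S}^2}e^{-x^2/(4a)}\,\ud S\le 4\pi^{3/2}\sqrt a$.
\item So the band contributes at most $4\pi^{3/2}\sqrt a\,e^{-3/(16b)}$, which is $o(\sqrt{ab})$ as $b\to0$, uniformly in $a$.
\end{itemize}

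The paper avoids the cap/band split entirely. In polar coordinates it uses $\tfrac{2}{\pi}\theta\le\sin\theta\le\theta$ and $\int_0^{2\pi}(\nu_1\cos^2\phi+\nu_2\sin^2\phi)^{-1}\ud\phi=2\pi/\sqrt{\nu_1\nu_2}$. This gives $\tilde Z\le\pi^3/(2\sqrt{\nu_1\nu_2})$ for all $\nu_1,\nu_2>0$, and hence the lower bound on all of $\mathcal{Q}_{phy}$ with no $\delta_0$. Your repaired Laplace-type estimate gives the sharper constant $\ln(8\pi)<3\ln\pi$, but only for small $b$.
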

 
Both bounds in Proposition \ref{theorem-upper-bound} are rigorously proved in \cite{MR4346781}. In particular, the proof of the  lower bound in \cite{MR4346781} is quite    delicate and complex. Due to its importance for our main Theorem \ref{thm-contact-set}, we shall provide  a shorter proof via  a dual formulation proposed in \cite{MR3423248}. 
Note that our argument does not rely on \eqref{upper-bound-new}.

  Since $f$ is rotation invariant (cf. \cite{maiersaupe}),   we   assume in this section  that any considered physical $Q$-tensor is diagonal:
\begin{equation}\label{Q-diagonal}
Q={\rm diag} (
      \lambda_1 ,\lambda_2,\lambda_3), \qquad -\frac13<\lambda_1\leq\lambda_2\leq\lambda_3<\frac23,\;\lambda_1+\lambda_2+\lambda_3=0.
\end{equation}

 \subsection{Dual formulation and  lower bound in Proposition \ref{theorem-upper-bound}}
  
We shall establish the lower bound in Proposition \ref{theorem-upper-bound} using a dual argument. For this purpose it is convenient to reparameterize the problem in terms of the distance to the physical boundary. Let $Q\in\mathcal{Q}_{phy}$ have eigenvalues $\lambda_1, \lambda_2, \lambda_3$. We define the shifted eigenvalues representing this distance:
\begin{equation}\label{def-delta}
\delta_1 = \lambda_1(Q) + \frac13, \qquad \delta_2 = \lambda_2(Q) + \frac13.
\end{equation}
Since $\tr Q = 0$, the third shifted eigenvalue is implicitly given by $\lambda_3 + \frac{1}{3} = 1 - \delta_1 - \delta_2$.  

Recall from Proposition \ref{maiersaupe1} that the optimal density function solving \eqref{def-singluar-potential} is
\begin{equation}\label{rhoQ}
\rho_Q(\mathbf{n}) = \frac{1}{Z(\mu_1, \mu_2, \mu_3)} \exp(\mu_1 x^2 + \mu_2 y^2 + \mu_3 z^2),
\end{equation}
subject to the moment constraints \eqref{second-moment-original}, where the full partition function is:
\begin{equation}\label{partition5}
Z(\mu_1, \mu_2, \mu_3) = \int_{\mathbb{S}^2} \exp(\mu_1 x^2 + \mu_2 y^2 + \mu_3 z^2) \,\ud\mathbf{n}.
\end{equation}

Because the parameters are subject to the gauge constraint $\mu_1 + \mu_2 + \mu_3 = 0$ and the geometric constraint $x^2+y^2+z^2=1$, we introduce a set of strictly independent dual variables:
\begin{equation}\label{def-nu}
\nu_1 = -(2\mu_1+\mu_2),\qquad \nu_2 = -(\mu_1+2\mu_2).
\end{equation}
Using $\mu_3 = -\mu_1 - \mu_2$ and $z^2 = 1 - x^2 - y^2$, we find that the exponent can be rewritten as:
\begin{equation}\label{partition1}
\begin{aligned}
&\mu_1 x^2 + \mu_2 y^2 + \mu_3 z^2 \\
&= \mu_1 x^2 + \mu_2 y^2 - (\mu_1 + \mu_2)(1 - x^2 - y^2) \\
&= -\nu_1 x^2 - \nu_2 y^2 - (\mu_1 + \mu_2).
\end{aligned}
\end{equation}
This motivates the definition of a reduced partition function $\tilde{Z}(\nu_1, \nu_2)$, which depends only on the independent variables:
\begin{equation}\label{reduced-Z}
\tilde{Z}(\nu_1, \nu_2) \triangleq  \int_{\mathbb{S}^2} \exp(-\nu_1 x^2 - \nu_2 y^2) \,\ud\mathbf{n}.
\end{equation}
Consequently, the full partition function decomposes cleanly as:
\begin{equation}\label{partition4}
Z(\mu_1, \mu_2, \mu_3) = \exp(-\mu_1 - \mu_2) \tilde{Z}(\nu_1, \nu_2).
\end{equation}

Evaluating the entropy potential $f(Q)$ at the optimal distribution $\rho_Q$, we expand the logarithmic term using \eqref{second-moment-original}:
\begin{align*}
f(Q) &= \int_{\mathbb{S}^2} \rho_Q \ln \rho_Q \,\ud\mathbf{n} = \int_{\mathbb{S}^2} \rho_Q \Big[ (\mu_1 x^2 + \mu_2 y^2 + \mu_3 z^2) - \ln Z \Big] \,\ud\mathbf{n}\\
&= \sum_{i=1}^3 \Big( \mu_i  \int_{\mathbb{S}^2} n_i^2 \rho_Q \,\ud\mathbf{n}  \Big) - (\ln Z)  \int_{\mathbb{S}^2} \rho_Q \,\ud\mathbf{n} \\
&= \sum_{i=1}^3   \mu_i  \lambda_i  - \ln Z.
\end{align*}
By substituting our shifted eigenvalues \eqref{def-delta} and the decomposed partition function \eqref{partition4}, we find:
\begin{align}
f(Q) &\overset{\eqref{def-delta}}= \mu_1 \delta_1 + \mu_2 \delta_2 + \mu_3(1 - \delta_1 - \delta_2) - \ln Z \nonumber \\
&= \delta_1(2\mu_1 + \mu_2) + \delta_2(\mu_1 +2 \mu_2) - (\mu_1 + \mu_2) - \Big[ -(\mu_1 + \mu_2) + \ln \tilde{Z}(\nu_1, \nu_2) \Big] \nonumber \\
&= -\delta_1\nu_1 - \delta_2\nu_2 - \ln \tilde{Z}(\nu_1, \nu_2). \label{entropy-exact}
\end{align}
Note that $\nu_1, \nu_2$ must satisfy the integral constraints implied by \eqref{def-delta}:
\begin{equation}\label{integral-constraints}
\begin{aligned}
\delta_1 &= \int_{\mathbb{S}^2 } x^2 \rho_Q \,\ud\mathbf{n}
= \frac{\int_{\mathbb{S}^2 } x^2 \exp(-\nu_1 x^2 - \nu_2 y^2) \,\ud\mathbf{n}}{\int_{\mathbb{S}^2 } \exp(-\nu_1 x^2 - \nu_2 y^2) \,\ud\mathbf{n}}, \\
\delta_2 &= \int_{\mathbb{S}^2 } y^2 \rho_Q \,\ud\mathbf{n}
= \frac{\int_{\mathbb{S}^2 } y^2 \exp(-\nu_1 x^2 - \nu_2 y^2) \,\ud\mathbf{n}}{\int_{\mathbb{S}^2 } \exp(-\nu_1 x^2 - \nu_2 y^2) \,\ud\mathbf{n}}.
\end{aligned}
\end{equation}

\begin{lemma} [Dual formulation]
The parameters $(\nu_1, \nu_2)$ satisfying \eqref{integral-constraints} are strictly positive (provided the state is not completely isotropic), and they are the unique global maximizers of the function $J : \mathbb{R}^2\to \mathbb{R}$ defined by:
\[\label{dual-formula}
J(\nu_1, \nu_2) \triangleq  -\delta_1\nu_1 - \delta_2\nu_2 - \ln \tilde{Z}(\nu_1, \nu_2),
\]
where $\tilde{Z}$ is given in \eqref{reduced-Z}. Thus, $f(Q)$ can be formulated    as an unconstrained maximization problem:
\begin{equation}\label{maiersaupe3}
f(Q) = \sup_{\nu_1, \nu_2 \in\mathbb{R}} J(\nu_1, \nu_2). 
\end{equation}
\end{lemma}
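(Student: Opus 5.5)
The plan is to treat $J$ as a Legendre-type dual function and use strict concavity. First, $\ln\tilde Z(\nu_1,\nu_2)$ is a log-partition function (cumulant generating function) of the random vector $(-x^2,-y^2)$ under the uniform measure on $\mathbb{S}^2$. Its Hessian is therefore the covariance matrix of $(x^2,y^2)$ under the tilted density $\rho\propto\exp(-\nu_1x^2-\nu_2y^2)$. This covariance matrix is positive definite for every $(\nu_1,\nu_2)$. Indeed, a degenerate covariance would force $a x^2+b y^2$ to be constant on $\mathbb{S}^2$ for some $(a,b)\neq 0$, which is impossible since $1,x^2,y^2$ are linearly independent functions on the sphere. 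Hence $\ln\tilde Z$ is strictly convex and $J$ is strictly concave on $\mathbb{R}^2$. Moreover,
\[
\nabla J=\big(-\delta_1+\langle x^2\rangle_\nu,\;-\delta_2+\langle y^2\rangle_\nu\big),
\]
so the critical-point equation $\nabla J=0$ is exactly \eqref{integral-constraints}. By strict concavity, any solution of \eqref{integral-constraints} is the unique global maximizer.

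Existence of a solution comes from Proposition \ref{maiersaupe1}. The optimal $\rho_Q$ has the form \eqref{rhoQ}, and via \eqref{partition1} its parameters $\nu_i$ from \eqref{def-nu} satisfy \eqref{integral-constraints}. The identity \eqref{entropy-exact} then gives $f(Q)=J(\nu_1,\nu_2)=\sup J$, which proves \eqref{maiersaupe3}.

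For positivity, Proposition \ref{lemma-orders-mu} gives $\mu_1\le\mu_2\le\mu_3$. Together with $\mu_1+\mu_2+\mu_3=0$ we get
\[
\nu_1=-(2\mu_1+\mu_2)=(\mu_3-\mu_1)+(-\mu_1-\mu_2-\mu_3)=\mu_3-\mu_1\ge 0,
\qquad
\nu_2=\mu_3-\mu_2\ge 0.
\]
This identity is the key algebraic point. For strictness, note that the map $\nu\mapsto\langle (x^2,y^2)\rangle_\nu$ is injective, being the gradient of a strictly convex function, and $\nu=0$ corresponds to the isotropic state $\delta_1=\delta_2=1/3$.

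The delicate case is equality. If $\nu_2=0$ with $\nu_1>0$, then $\mu_2=\mu_3$, and by the uniqueness of the multipliers this corresponds to $\lambda_2=\lambda_3$. So strict positivity of both $\nu_i$ holds only when $\lambda_1<\lambda_2<\lambda_3$. In the uniaxial case one of them vanishes, while the other is still positive unless the state is fully isotropic.

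The main obstacle is therefore the interpretation of "strictly positive". I would state it as $\nu_1\ge\nu_2\ge0$, with $\nu_1>0$ whenever $Q\neq0$, and with both strictly positive when $\lambda_2<\lambda_3$. To get these strict inequalities, I would take the equality cases of Proposition \ref{lemma-orders-mu} together with the injectivity of the moment map. The inequality $\nu_1\ge\nu_2$ follows from $\mu_1\le\mu_2$.
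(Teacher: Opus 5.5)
Your proposal is correct and follows essentially the same route as the paper: nonnegativity of $\nu_i$ comes from the ordering $\mu_1\le\mu_2\le\mu_3$, the equation $\nabla J=0$ is identified with \eqref{integral-constraints}, strict concavity of $J$ follows because its Hessian is minus the nondegenerate covariance matrix of $(x^2,y^2)$, and \eqref{entropy-exact} then gives $f(Q)=\sup J$. Your caveat that $\nu_2=0$ exactly when $\lambda_2=\lambda_3$ matches the paper's own remark inside its proof, and your argument that $\nu_1>0$ for $Q\neq 0$, via injectivity of the moment map together with $\nu_1\ge\nu_2\ge 0$, justifies explicitly what the paper simply asserts as $\mu_1<\mu_3$.
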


\begin{proof}
First, we verify the non-negativity of $\nu_i$. From Lemma \ref{lemma-orders-mu}, we know that $\mu_1 \leq \mu_2 \leq \mu_3$. Combined with the zero-trace condition $\mu_1 + \mu_2 + \mu_3 = 0$, this implies $\mu_1 \leq 0$ and $\mu_3 \geq 0$. Now, observing the definitions of $\nu_i$ in \eqref{def-nu}, we have $\nu_1 = \mu_3 - \mu_1 \geq 0$ and $\nu_2 = \mu_3 - \mu_2 \geq 0$. As long as the state is strictly inside the physical domain and not completely isotropic ($Q \neq 0$), we have $\mu_1 < \mu_3$, which immediately implies $\nu_1 > 0$. While $\nu_2$ can be zero for uniaxial states where $\lambda_2 = \lambda_3$, the function $J$ remains well-defined and continuous for $\nu_2 \geq 0$.

 Next, taking the first partial derivative of $J$ with respect to $\nu_1$ gives:
\begin{align*}
\frac{\partial J}{\partial \nu_1} = -\delta_1 - \frac{\partial}{\partial \nu_1} \ln \tilde{Z}(\nu_1, \nu_2)\overset{\eqref{reduced-Z}} = -\delta_1 + \frac{\int_{\mathbb{S}^2} x^2 \exp(-\nu_1 x^2 - \nu_2 y^2) \,\ud\mathbf{n}}{\int_{\mathbb{S}^2} \exp(-\nu_1 x^2 - \nu_2 y^2) \,\ud\mathbf{n}}.
\end{align*}
Setting $\frac{\partial J}{\partial \nu_1} = 0$ exactly recovers the first integral constraint for $\delta_1$ in \eqref{integral-constraints}. By exact symmetry, setting $\frac{\partial J}{\partial \nu_2} = 0$ yields the second integral constraint for $\delta_2$. The exact Lagrange multipliers $(\nu_1^*, \nu_2^*)$ are thus precisely the critical points where $\nabla J = 0$.
  
To prove that such a  critical point is a global supremum, we look at the Hessian matrix of $J$. Let 
\begin{equation}\label{density2}
\rho(\nu_1, \nu_2) = \frac{\exp(-\nu_1 x^2 - \nu_2 y^2)}{\int_{\mathbb{S}^2} \exp(-\nu_1 x^2 - \nu_2 y^2) \,\ud\mathbf{n}}
\end{equation}
be the probability density for a given set of parameters. We already showed $\frac{\partial J}{\partial \nu_1} = -\delta_1 + \langle x^2 \rangle_\rho$. Taking the second derivative $\frac{\partial^2 J}{\partial \nu_1^2}$ means taking the derivative of the expectation $\langle x^2 \rangle_\rho$ with respect to $\nu_1$. Through standard properties of exponential families, the derivative of an expected value with respect to its natural parameter is its variance:
\begin{equation}\non
\frac{\partial^2 J}{\partial \nu_1^2} = -\Big( \langle x^4 \rangle_\rho - \langle x^2 \rangle_\rho^2 \Big) = -\text{Var}_\rho(x^2).
\end{equation}
In order for $\text{Var}_\rho(x^2)=0$ we must have $x^2=const$ almost everywhere  with respect to $\rho(\nu_1, \nu_2)$, which is impossible given the exponential form of $\rho$.
Similarly, $\frac{\partial^2 J}{\partial \nu_2^2}= -\text{Var}_\rho(y^2)$, and  the mixed partial derivative is the negative covariance:
\begin{equation}\non
\frac{\partial^2 J}{\partial \nu_1 \partial \nu_2} = -\Big( \langle x^2 y^2 \rangle_\rho - \langle x^2 \rangle_\rho \langle y^2 \rangle_\rho \Big) = -\text{Cov}_\rho(x^2, y^2).
\end{equation}
This means the Hessian matrix   $H(J)$ is exactly the negative covariance matrix of the random vector $(x^2, y^2)$.

 To ensure $H(J)$ is strictly negative definite, we must verify that the covariance matrix is strictly positive definite. A covariance matrix is singular   if and only if there exists an exact linear relationship between the random variables; that is, if there exist constants $c_1, c_2, c_3$ (not all zero) such that $c_1 x^2 + c_2 y^2 = c_3$ almost everywhere with respect to $\rho$, cf. \cite[Section 3.6]{MR4229142}. Since $x^2 + y^2 + z^2 = 1$ on $\mathbb{S}^2$, this condition implies $c_1 x^2 + c_2 y^2 = c_3 (x^2 + y^2 + z^2)$ on the sphere. Because the continuous polynomials $x^2, y^2,$ and $z^2$ are linearly independent over $\mathbb{S}^2$, this identity forces $c_1 = c_3$, $c_2 = c_3$, and $0 = c_3$, which means $c_1 = c_2 = c_3 = 0$. This contradicts the assumption that the constants are not all zero. Therefore, $x^2$ and $y^2$ are   linearly independent as functions on the sphere. Because the density $\rho$ is strictly positive everywhere, the variance of any non-trivial linear combination is strictly positive. Consequently, the covariance matrix is strictly positive definite, making the Hessian matrix $H(J)$ strictly negative definite. Since $J$ is strictly concave over $\mathbb{R}^2$, the unique critical point is the global supremum. 
\end{proof}

 As an immediate application of the dual formulation \eqref{maiersaupe3}, we can easily recover the well-known Bingham closure, which characterizes $f(Q)$ as a Legendre transform.

\begin{corollary} \cite{MR3423248}
 $f(Q)$ is the Legendre transform of the function $ \ln Z(M)$ where 
\begin{equation}\label{partition2}
   Z(M)\triangleq  \int_{\mathbb{S}^2} e^{M : \mathbf{n} \otimes \mathbf{n}} \,\ud\mathbf{n} : \mathcal{Q}\to \mathbb{R}.
\end{equation}
More precisely, 
\begin{equation}\label{sup-problem}
f(Q) = \sup_{M \in\mathcal{Q}}\left(M : Q-\ln Z(M)\right).
\end{equation}
\end{corollary}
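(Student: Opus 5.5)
The plan is to reduce \eqref{sup-problem} to the two-variable dual formulation \eqref{maiersaupe3} by exploiting rotation invariance and the diagonal structure of $Q$. Both sides of \eqref{sup-problem} are invariant under $Q\mapsto RQR^T$, $M\mapsto RMR^T$ for $R\in SO(3)$. Indeed, $Z(RMR^T)=Z(M)$ by the change of variables $\mathbf{n}\mapsto R^T\mathbf{n}$ on $\mathbb{S}^2$, and $f$ is rotation invariant. So we may assume $Q$ is diagonal as in \eqref{Q-diagonal}.

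\textbf{Lower bound.} Take $M=\mathrm{diag}(\mu_1,\mu_2,\mu_3)$ with the Lagrange multipliers of Proposition \ref{maiersaupe1}. Then $M:\mathbf{n}\otimes\mathbf{n}=\mu_1x^2+\mu_2y^2+\mu_3z^2$, so $Z(M)$ coincides with \eqref{partition6}. The computation preceding \eqref{entropy-exact} gives $M:Q-\ln Z(M)=\sum\mu_i\lambda_i-\ln Z=f(Q)$. Hence the supremum is at least $f(Q)$.

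\textbf{Upper bound.} For arbitrary $M\in\mathcal{Q}$ there are two routes.

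The first is a direct Gibbs-type inequality. For any $\rho\in\mathcal{A}_Q$, Jensen's inequality (equivalently, nonnegativity of the relative entropy of $\rho$ with respect to $e^{M:\mathbf{n}\otimes\mathbf{n}}/Z(M)$) gives
\[
\int\rho\ln\rho\,\ud\mathbf{n}\ \ge\ \int \rho\, M:\mathbf{n}\otimes\mathbf{n}\,\ud\mathbf{n}-\ln Z(M) = M:Q - \ln Z(M).
\]
The last equality uses $\int\rho\,\mathbf{n}\otimes\mathbf{n}\,\ud\mathbf{n}=Q+\frac13\II$ and $\tr M=0$. Taking the infimum over $\rho$ yields $f(Q)\ge M:Q-\ln Z(M)$.

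Since the section emphasizes the dual formulation, I would also show the second route through \eqref{maiersaupe3}. Split $M=D+O$ into its diagonal and off-diagonal parts. The diagonal case reduces exactly to $J(\nu_1,\nu_2)$ via \eqref{def-nu} and \eqref{partition4}, so it is bounded by $f(Q)$ by the dual lemma. For the off-diagonal part, note that $M:Q=D:Q$ because $Q$ is diagonal. It then suffices to show $Z(M)\ge Z(D)$. I expect this to be the main obstacle, since it is not immediate. One way to handle it is to average $e^{M:\mathbf{n}\otimes\mathbf{n}}$ over the reflections $\mathbf{n}\mapsto(\pm x,\pm y,\pm z)$, which preserve $D$ and flip the signs of the off-diagonal entries, and then apply Jensen's inequality to the convex exponential. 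Because this step is awkward, I would present the Gibbs-inequality argument as the primary proof and the reduction to \eqref{maiersaupe3} as the remark explaining the connection.

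Finally, combining the lower and upper bounds gives equality in \eqref{sup-problem}, with the supremum attained at $M=\mathrm{diag}(\mu_i)$ in the eigenframe of $Q$. This identifies $f$ as the Legendre transform of $\ln Z$ restricted to $\mathcal{Q}$.
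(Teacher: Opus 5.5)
Your proposal is correct, but its main argument is organized differently from the paper's. The paper first reduces to diagonal $M$, asserting that by isotropy of $Z$ the supremum in \eqref{sup-problem} is attained in the eigenframe of $Q$. It then identifies $M:Q-\ln Z(M)$ with $J(\nu_1,\nu_2)$ and invokes \eqref{maiersaupe3}, i.e.\ the strict concavity of $J$ together with the identification of its critical point with the Boltzmann multipliers. You instead split the claim into weak duality and attainment. The relative-entropy (Gibbs) inequality yields $M:Q-\ln Z(M)\le f(Q)$ for \emph{every} $M\in\mathcal{Q}$ straight from the primal definition of $f$, with no diagonalization and no use of the dual lemma. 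The reverse inequality needs only the Boltzmann form of the minimizer from Proposition \ref{maiersaupe1}. Your route is more elementary, and its weak-duality half is all that the subsequent lower-bound corollary uses, since that corollary only needs $f(Q)\ge J(\nu_1,\nu_2)$ for arbitrary test parameters. What the paper's route adds is the explicit two-variable concave structure and the location of the maximizer.

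Your secondary route is also sound, and less awkward than you suggest. The paper's eigenframe reduction is stated without proof and needs an ingredient beyond isotropy, such as von Neumann's trace inequality. Your observation supplies exactly that missing step: for $S_\epsilon=\mathrm{diag}(\epsilon_1,\epsilon_2,\epsilon_3)$ with $\epsilon\in\{\pm1\}^3$, one has $\frac18\sum_{\epsilon}S_\epsilon M S_\epsilon=D$, so Jensen gives $Z(M)\ge Z(D)$ while $M:Q=D:Q$.

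Finally, both arguments prove equality only for $Q\in\mathcal{Q}_{phy}$. For non-physical $Q$, where $f(Q)=+\infty$, one still has to show the supremum is infinite. This follows by testing $M=-t(\mathbf{e}\otimes\mathbf{e}-\frac13\II)$ with $\mathbf{e}$ a unit eigenvector for $\lambda_1(Q)\le-\frac13$, since then $M:Q-\ln Z(M)\ge\frac12\ln t-C\to\infty$.
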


\begin{proof}
 Because the partition function $Z(M )$ is isotropic, the supremum in \eqref{sup-problem} is achieved when $M $ and $Q$ share the same eigenframe. Therefore, to compute the right-hand side, it suffices to consider diagonal matrices:
$$M ={\rm diag}\{\mu_1,\mu_2,\mu_3\}\in\mathcal{Q}\quad \text{and}\quad Q={\rm diag}\{\lambda_1,\lambda_2,\lambda_3\} \in\mathcal{Q}_{phy}.$$ 

 First, we evaluate the inner product $M  : Q$ using the shifted eigenvalues defined in \eqref{def-delta}:
\begin{align*}
M  : Q =& \mu_1 \Big(\delta_1 - \frac{1}{3}\Big) + \mu_2 \Big(\delta_2 - \frac{1}{3}\Big) + \mu_3 \Big(1 - \delta_1 - \delta_2 - \frac{1}{3}\Big)\\
  =& \mu_1 \delta_1 + \mu_2 \delta_2 + \mu_3 (1 - \delta_1 - \delta_2) - \frac{1}{3}\underbrace{(\mu_1 + \mu_2 + \mu_3)}_{=0}\\
  =& \mu_1 \delta_1 + \mu_2 \delta_2 - (\mu_1 + \mu_2) + \delta_1(\mu_1 + \mu_2) + \delta_2(\mu_1 + \mu_2) \\
   =& \delta_1(2\mu_1 + \mu_2) + \delta_2(\mu_1 + 2\mu_2) - (\mu_1 + \mu_2)\\
   \overset{\eqref{def-nu}}=& -\delta_1 \nu_1 - \delta_2 \nu_2 - (\mu_1 + \mu_2).
\end{align*}

We can use \eqref{partition5}
 and  \eqref{partition4} to write:
\begin{equation}\non
  \ln Z(\mu_1, \mu_2, \mu_3) = -(\mu_1 + \mu_2) + \ln \tilde{Z}(\nu_1, \nu_2).
\end{equation}

 Subtracting the two expressions cancels the $(\mu_1 + \mu_2)$ terms, yielding:
\begin{align*}
M  : Q -   \ln Z  = -\delta_1 \nu_1 - \delta_2 \nu_2 - \ln \tilde{Z}(\nu_1, \nu_2).
\end{align*}
 The right-hand side is exactly the   function $J(\nu_1, \nu_2)$ defined in \eqref{dual-formula}. Taking the supremum over all admissible $M $ corresponds to taking the supremum over $\nu_1, \nu_2 \in \mathbb{R}$, so the result follows directly from \eqref{maiersaupe3}.
\end{proof}

 Finally, we utilize the unconstrained dual formulation \eqref{maiersaupe3} to provide a direct   proof of the lower bound stated in Proposition \ref{theorem-upper-bound}:

\begin{corollary} 
There exists a small computable constant $\delta_0>0$ such that, whenever $Q\in \mathcal{Q}_{phy}$  with $\lambda_2(Q)+1/3<\delta_0$, it holds that:
\begin{equation}\label{lower-bound-new}
f(Q)\geq -\frac12\ln\Big(\lambda_1(Q)+\frac13\Big)-\frac12\ln\Big(\lambda_2(Q)+\frac13\Big) - 1 - 3\ln\pi. 
\end{equation}
\end{corollary}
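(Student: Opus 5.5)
Since \eqref{maiersaupe3} expresses $f(Q)$ as a supremum, any admissible choice of $(\nu_1,\nu_2)\in\mathbb{R}^2$ gives a lower bound $f(Q)\ge J(\nu_1,\nu_2)$. We never need to solve the constraints \eqref{integral-constraints}. We only need a good test pair, and we take
\[
\nu_1=\frac{1}{2\delta_1},\qquad \nu_2=\frac{1}{2\delta_2},
\]
which mimics the Gaussian scaling near the pole $z=\pm1$. This gives $-\delta_1\nu_1-\delta_2\nu_2=-1$, which is exactly the $-1$ in \eqref{lower-bound-new}. It remains to show
\[
\ln\tilde Z\Big(\frac{1}{2\delta_1},\frac{1}{2\delta_2}\Big)\le \frac12\ln\delta_1+\frac12\ln\delta_2+3\ln\pi,
\]
that is, $\tilde Z\le \pi^3\sqrt{\delta_1\delta_2}$ for $\delta_1\le\delta_2<\delta_0$.

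To estimate $\tilde Z(\nu_1,\nu_2)=\int_{\mathbb{S}^2}e^{-\nu_1x^2-\nu_2y^2}\,\ud\mathbf{n}$, split the sphere into the two hemispheres $z>0$ and $z<0$, which are symmetric, and parametrize each as a graph over the unit disk in $(x,y)$. The surface element is $\ud\mathbf{n}=\ud x\,\ud y/\sqrt{1-x^2-y^2}$. This gives
\[
\tilde Z=2\int_{x^2+y^2<1}\frac{e^{-\nu_1x^2-\nu_2y^2}}{\sqrt{1-x^2-y^2}}\,\ud x\,\ud y.
\]
Split the disk into the region $x^2+y^2\le 1/2$, where the weight is at most $\sqrt2$, and the annulus $1/2<x^2+y^2<1$.

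On the inner region, bound by the full Gaussian integral over $\mathbb{R}^2$. This contributes at most
\[
2\sqrt2\,\frac{\pi}{\sqrt{\nu_1\nu_2}}=4\sqrt2\,\pi\sqrt{\delta_1\delta_2}.
\]
On the annulus, we have $\nu_1x^2+\nu_2y^2\ge \min(\nu_1,\nu_2)(x^2+y^2)\ge \frac{1}{4\delta_2}$, since $\delta_1\le\delta_2$. The weight $1/\sqrt{1-r^2}$ is integrable over the annulus with integral $2\pi\int_{1/\sqrt2}^1 r(1-r^2)^{-1/2}\ud r=\sqrt2\,\pi$. So this part contributes at most $2\sqrt2\,\pi e^{-1/(4\delta_2)}$.

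The main obstacle is this annulus term: we must show $e^{-1/(4\delta_2)}\lesssim\sqrt{\delta_1\delta_2}$. It fails if $\delta_1$ is exponentially smaller than $\delta_2$. In that case the test choice $\nu_2=1/(2\delta_2)$ is too weak near the equator $x=0$.

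To fix this, refine the annulus estimate by doing the $x$-integration first as a Gaussian. Over the annulus write $\ud\mathbf{n}$ in cylindrical-type coordinates $(x,\theta)$ on the sphere, where $\ud\mathbf{n}=\ud x\,\ud\theta$ by Archimedes. There $y^2=(1-x^2)\cos^2\theta$, and $|x|\le 1/\sqrt2$ gives $y^2\ge \frac12\cos^2\theta$. The integral factorizes and is bounded by
\[
\Big(\int_{\mathbb{R}}e^{-\nu_1x^2}\ud x\Big)\Big(\int_0^{2\pi}e^{-\frac{\nu_2}{2}\cos^2\theta}\ud\theta\Big)\lesssim \sqrt{\delta_1}\,\sqrt{\delta_2}.
\]
The $\theta$-integral is small because $\cos^2\theta$ vanishes only at isolated points, so it behaves like $\nu_2^{-1/2}$. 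The remaining part of the annulus, where $|x|>1/\sqrt2$, carries the factor $e^{-\nu_1/2}=e^{-1/(4\delta_1)}\le C\delta_1\le C\sqrt{\delta_1\delta_2}$.

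Collecting the pieces gives $\tilde Z\le C\sqrt{\delta_1\delta_2}$ with an explicit $C$. We then choose $\delta_0$ small and check $C\le\pi^3\approx 31$, which is the bookkeeping step. If the constant overshoots, we adjust the test pair to $\nu_i=c/\delta_i$ with $c$ slightly different from $1/2$ and recheck; alternatively, we can simply use the Archimedes coordinates for the whole sphere from the start.
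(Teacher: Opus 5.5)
Your reduction matches the paper's. The dual formula gives $f(Q)\ge J(\nu_1,\nu_2)$ for any pair, the test pair $\nu_i=1/(2\delta_i)$ produces the $-1$, and everything rests on $\tilde Z\le \pi^3\sqrt{\delta_1\delta_2}$, i.e.\ $\tilde Z\le \frac{\pi^3}{2}(\nu_1\nu_2)^{-1/2}$. Any bound $\tilde Z\le K(\nu_1\nu_2)^{-1/2}$ gives the logarithmic terms for free, so the real content of the corollary is the constant. The step you call bookkeeping is therefore the proof, and along your main route it fails.

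Your inner-disk bound gives $4\sqrt2\,\pi\sqrt{\delta_1\delta_2}\approx 17.8\sqrt{\delta_1\delta_2}$. To factorize the band term you drop the annulus restriction, so you integrate over the whole band $|x|\le 1/\sqrt2$. That band contains the poles $(0,0,\pm1)$, where the Gaussian mass sits. With $\int_0^{2\pi}e^{-a\cos^2\theta}\,\ud\theta\le \pi^{3/2}a^{-1/2}$ the band term is $2\sqrt2\,\pi^2\sqrt{\delta_1\delta_2}\approx 27.9\sqrt{\delta_1\delta_2}$. Even with the asymptotically exact value $2\sqrt{\pi/a}$ it is $\approx 4\sqrt2\,\pi\sqrt{\delta_1\delta_2}$.

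So as $\delta_1,\delta_2\to0$ your two bounds sum to at least about $8\sqrt2\,\pi\sqrt{\delta_1\delta_2}\approx 35.5\sqrt{\delta_1\delta_2}$, which exceeds $\pi^3\sqrt{\delta_1\delta_2}\approx 31.0\sqrt{\delta_1\delta_2}$. The true mass $2\pi(\nu_1\nu_2)^{-1/2}$ is counted twice, each time with a factor $\sqrt2$, and shrinking $\delta_0$ cannot fix a leading-order constant.

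Your first remedy cannot help either. If $\tilde Z\le K(\nu_1\nu_2)^{-1/2}$, then $\nu_i=c/\delta_i$ gives $f(Q)\ge -2c+\ln c-\ln K-\frac12\ln(\delta_1\delta_2)$. Since $-2c+\ln c$ is maximal at $c=1/2$, you need $K\le \pi^3/2$ whatever $c$ you pick.

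Your second remedy does work, and it should be the proof. Use Archimedes coordinates about the $x$-axis on the whole sphere, split at $|x|=1/\sqrt2$, use $1-x^2\ge\frac12$ on the band and the cap area $2\pi(2-\sqrt2)$ elsewhere:
\begin{align*}
\tilde Z&=\int_{-1}^{1}e^{-\nu_1x^2}\int_0^{2\pi}e^{-\nu_2(1-x^2)\cos^2\theta}\,\ud\theta\,\ud x\\
&\le \sqrt{\frac{\pi}{\nu_1}}\,\frac{\pi^{3/2}}{\sqrt{\nu_2/2}}+2\pi(2-\sqrt2)\,e^{-\nu_1/2}
=2\sqrt2\,\pi^2\sqrt{\delta_1\delta_2}+2\pi(2-\sqrt2)\,e^{-1/(4\delta_1)}.
\end{align*}
Three facts close it: $2\sqrt2\,\pi^2<\pi^3$, $\delta_1\le\sqrt{\delta_1\delta_2}$, and $\delta\mapsto e^{-1/(4\delta)}/\delta$ is increasing on $(0,1/4]$. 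Together they give $\tilde Z\le \pi^3\sqrt{\delta_1\delta_2}$ for, e.g., $\delta_0=1/12$.

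Alternatively, keep your disk split but bound the annulus using $\nu_1x^2+\nu_2y^2\ge \nu_2/2+(\nu_1-\nu_2)x^2$. This makes the annulus term $o(\sqrt{\delta_1\delta_2})$, so the inner term $17.8<31.0$ wins.

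The paper needs no splitting. It works in spherical coordinates about the $z$-axis, uses $\sin\theta\ge 2\theta/\pi$, and evaluates $\int_0^{2\pi}(\nu_1\cos^2\phi+\nu_2\sin^2\phi)^{-1}\,\ud\phi=2\pi/\sqrt{\nu_1\nu_2}$ exactly. This gives $\tilde Z\le \pi^3/(2\sqrt{\nu_1\nu_2})$ for all $\nu_1,\nu_2>0$. The anisotropic regime $\delta_1\ll\delta_2$ that worried you is absorbed by the $\phi$-integral, and $3\ln\pi$ comes out exactly with no smallness $\delta_0$ needed. Your Archimedes route has a slightly better leading constant, but it needs $\delta_0$ to absorb the cap term.
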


\begin{proof}
Recall \eqref{maiersaupe3} and \eqref{dual-formula}.
To establish a lower bound for  $f(Q) = \sup J(\nu_1, \nu_2)$, it suffices to derive  an upper bound for the reduced partition function $\tilde{Z}(\nu_1, \nu_2)$. Using standard spherical coordinates 
\begin{equation}\label{spherical-coord}
x=\sin\theta\cos\phi, \quad y=\sin\theta\sin\phi, \quad z=\cos\theta,
\end{equation}
and denoting $A(\phi) = \nu_1\cos^2\phi + \nu_2\sin^2\phi$, we have:
\begin{equation}\non
\tilde{Z}(\nu_1, \nu_2) = 2 \int_0^{2\pi} \int_0^{\frac{\pi}{2}} \sin\theta \exp\big(-A(\phi)\sin^2\theta\big) \,\ud\theta \,\ud\phi.
\end{equation}
Applying the inequality $\theta\geq \sin\theta \geq \frac{2}{\pi}\theta$ for $\theta \in [0, \frac{\pi}{2}]$, we can bound the inner integral:
\begin{align*}
\tilde{Z}(\nu_1, \nu_2) &\leq 2 \int_0^{2\pi} \int_0^{\frac{\pi}{2}} \theta \exp\Big(-A(\phi)\frac{4}{\pi^2}\theta^2\Big) \,\ud\theta \,\ud\phi \\
&\leq 2 \int_0^{2\pi} \left[ \frac{\pi^2}{8A(\phi)} \Big(1 - \exp\big(-A(\phi)\big) \Big) \right] \ud\phi \\
&\leq \frac{\pi^2}{4} \int_0^{2\pi} \frac{1}{\nu_1\cos^2\phi + \nu_2\sin^2\phi} \,\ud\phi.
\end{align*}
This standard elliptic integral evaluates precisely to $\frac{2\pi}{\sqrt{\nu_1\nu_2}}$. Thus, we obtain the rigorous upper bound:
\begin{equation}\label{Z-bound}
\tilde{Z}(\nu_1, \nu_2) \leq \frac{\pi^3}{2\sqrt{\nu_1\nu_2}}.
\end{equation}
Substituting the bound \eqref{Z-bound} into \eqref{entropy-exact} gives  a global lower bound for   $J$:
\begin{equation}\non
f(Q) \geq -\delta_1\nu_1 - \delta_2\nu_2 + \frac{1}{2}\ln \nu_1 + \frac{1}{2}\ln\nu_2 - \ln\Big(\frac{\pi^3}{2}\Big), \qquad \forall \nu_1, \nu_2 > 0.
\end{equation}
To maximize this lower bound, we optimize the test multipliers by taking the partial derivatives with respect to $\nu_1$ and $\nu_2$ and setting them to zero. This yields the optimal asymptotic selections
$\nu_1 = \frac{1}{2\delta_1},  \nu_2 = \frac{1}{2\delta_2}.$
Plugging these choices back into the inequality, we obtain:
\begin{align*}
f(Q)  \geq    -1 + \frac{1}{2}\ln\Big(\frac{1}{4\delta_1\delta_2}\Big) - 3\ln\pi + \ln 2.
\end{align*} 
In view of the definitions in \eqref{def-delta},  this implies the desired lower bound.
\end{proof}


\subsection{Self-concordance of $f$}

The following important result has been established in \cite{MR3383939} by a technical integral calculation. Using dual formula \eqref{sup-problem} we can give a simpler proof.
 \begin{lemma}\label{Self-concordance-thm}
The  singular potential \eqref{def-singluar-potential} satisfies
 \begin{equation}\label{self-concordant}
A : D^2_Q f(Q) A \geq   C  (D_Q f(Q) : A)^2 \qquad \forall Q\in\mathcal{Q}_{phy}, \forall A\in\mathcal{Q}.
 \end{equation}
 for a  constant $C>0$ that is independent of $Q$ and $A$.
\end{lemma}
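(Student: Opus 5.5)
The plan is to exploit the Legendre duality \eqref{sup-problem}: $f$ is the convex conjugate of the smooth, strictly convex function $\Phi(M)\triangleq\ln Z(M)$ on $\mathcal{Q}$. Strict convexity is the covariance argument from the proof of the dual formulation lemma. For $Q\in\mathcal{Q}_{phy}$, let $M=M(Q)\in\mathcal{Q}$ be the unique maximizer in \eqref{sup-problem}. Standard conjugate duality then gives
\[
D_Qf(Q)=M,\qquad D^2_Qf(Q)=\big(D^2_M\Phi(M)\big)^{-1}\quad\text{on }\mathcal{Q}.
\]
Moreover, $B:D^2_M\Phi(M)B=\mathrm{Var}_{\rho}(B:\n\otimes\n)$, where $\rho=\rho_Q=e^{M:\n\otimes\n}/Z(M)$.

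For a positive definite operator $H$ on $\mathcal{Q}$ we have $A:H^{-1}A=\sup_{B\in\mathcal{Q}}\big(2A:B-B:HB\big)$. Restricting to $B=tM$, $t\in\mathbb{R}$, and optimizing in $t$ yields
\[
A:D^2_Qf(Q)A\;\geq\;\frac{(M:A)^2}{\mathrm{Var}_{\rho}(M:\n\otimes\n)}=\frac{(D_Qf(Q):A)^2}{\mathrm{Var}_{\rho}(M:\n\otimes\n)}.
\]
(If $M=0$ the claim is trivial.) Hence \eqref{self-concordant} reduces to a uniform bound
\[
\sup_{M\in\mathcal{Q}}\mathrm{Var}_{\rho_M}(M:\n\otimes\n)\;\leq\;C^{-1}<\infty,
\]
i.e.\ the ``specific heat'' of the Bingham family is bounded.

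To prove this bound, use rotation invariance to assume $M=\mathrm{diag}(\mu_1,\mu_2,\mu_3)$ with $\mu_1\le\mu_2\le\mu_3$. The variance is unchanged by adding a constant to the random variable. Subtracting $\mu_3$ and using $x^2+y^2+z^2=1$ as in \eqref{partition1}, the exponent becomes $-u$ with
\[
u=\nu_1x^2+\nu_2y^2\ge 0,\qquad \nu_1,\nu_2\ge0 .
\]
Then $\mathrm{Var}_\rho(M:\n\otimes\n)=\mathrm{Var}_\rho(u)\le\langle u^2\rangle_\rho$. Pass to the coordinates \eqref{spherical-coord} on the upper hemisphere, which suffices by evenness, with $v=\cos\theta\in[0,1]$ so that $\ud\mathbf{n}=\ud v\,\ud\phi$ and $u=A(\phi)(1-v^2)$, where $A(\phi)=\nu_1\cos^2\phi+\nu_2\sin^2\phi\ge0$. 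Then $\langle u^2\rangle_\rho$ is a convex combination, with positive weights $\int_0^1e^{-A(\phi)(1-v^2)}\ud v$ in $\phi$, of the one-dimensional ratios
\[
F(a)\triangleq\frac{\int_0^1a^2(1-v^2)^2e^{-a(1-v^2)}\,\ud v}{\int_0^1e^{-a(1-v^2)}\,\ud v},\qquad a=A(\phi)\ge0 .
\]
Consequently $\langle u^2\rangle_\rho\le\sup_{a\ge0}F(a)$.

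The main step is showing $\sup_{a\ge0}F(a)<\infty$. The function $F$ is continuous on $[0,\infty)$ with $F(0)=0$, so it suffices to control $a\to\infty$. I will substitute $w=a(1-v^2)$, so that $\ud v=\ud w/(2a v)$, and split the range into $v\ge 1/2$ and $v<1/2$:
\begin{itemize}
\item On $v<1/2$, i.e.\ $w\ge 3a/4$, both integrals are exponentially small in $a$.
\item On $v\ge1/2$, we have $v\in[1/2,1]$, so up to a factor $2$ the numerator and denominator behave like $\frac1{2a}\int_0^{a}w^2e^{-w}\,\ud w$ and $\frac1{2a}\int_0^{a}e^{-w}\,\ud w$.
\end{itemize}
Hence $\limsup_{a\to\infty}F(a)\le 4\,\Gamma(3)/\Gamma(1)=8$, and $F$ is bounded on $[0,\infty)$.

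This yields \eqref{self-concordant} with $C=(\sup_{a\ge0}F(a))^{-1}$, which is independent of $Q$ and $A$. The only delicate point is justifying $D^2f=(D^2\Phi)^{-1}$. For this I will rely on the strict concavity of $J$ established above and on the implicit function theorem applied to the moment map $M\mapsto D\Phi(M)$, which is a diffeomorphism of $\mathcal{Q}$ onto $\mathcal{Q}_{phy}$.
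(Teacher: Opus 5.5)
Your proposal is correct. The reduction is the same as the paper's. Legendre duality gives $D_Qf(Q)=M_Q$ and $D^2_Qf(Q)=\mathcal{H}^{-1}$, and restricting $\sup_B(2A:B-B:\mathcal{H}B)$ to the ray $B=tM_Q$ is the paper's Cauchy--Schwarz step in the $\mathcal{H}$-inner product. Both lead to the same requirement, $\sup\mathrm{Var}_{\rho}(\nu_1x^2+\nu_2y^2)<\infty$.

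The routes differ on this uniform variance bound. The paper uses Laplace's method in two regimes: $\min(\nu_1,\nu_2)\to\infty$, where the variance tends to $1$, and $\nu_1\to\infty$ with $\nu_2$ bounded, where it tends to a $\nu_2$-dependent limit. It then invokes continuity to obtain $V_{\max}$. You instead bound the variance by the second moment, $\mathrm{Var}\le\langle u^2\rangle$, and slice the hemisphere along meridians. This makes $\langle u^2\rangle_\rho$ a weighted average in $\phi$ of a single function $F(A(\phi))$, which you bound by the substitution $w=a(1-v^2)$. Your constant is loose but harmless: on $v\ge\frac12$ one has $\frac1{2a}\le\frac1{2av}\le\frac1a$, so $\limsup_{a\to\infty}F(a)\le4$, and in fact $F(a)\to2$.

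What your route buys is a bound that holds uniformly over all $M\in\mathcal{Q}$ in one stroke, with an explicit constant. It needs none of the following, which the paper leaves implicit when it passes from pointwise limits to a global $V_{\max}$:
\begin{itemize}
\item uniformity of the $o(1)$ Laplace errors;
\item control of the Case~2 limit as $\nu_2$ ranges over $[0,\infty)$;
\item separate treatment of intermediate regimes.
\end{itemize}
It also avoids Proposition~\ref{lemma-orders-mu}, since you order the eigenvalues of an arbitrary $M$ by rotation. The paper's route, in exchange, identifies the precise limiting values of the variance near $\partial\mathcal{Q}_{phy}$. That information is lost when the variance is replaced by the second moment, but the lemma does not need it.

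On the point you flag, the inverse-function-theorem justification of $D^2_Qf=\mathcal{H}^{-1}$ works if you take the moment map as $M\mapsto\langle\n\otimes\n\rangle_{\rho_M}-\frac13\II$, so that it lands in $\mathcal{Q}$. Surjectivity onto $\mathcal{Q}_{phy}$ is supplied by Proposition~\ref{maiersaupe1}.
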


\begin{proof}[Proof of Lemma \ref{Self-concordance-thm}]

 \textbf{Reduction by Legendre transformation:}

 Recall from \eqref{partition6} that the partition function can be generalized for any matrix $M\in\mathcal{Q}$:
 \begin{equation}\label{partition7}
 Z(M) = \int_{\mathbb{S}^2} e^{M : \mathbf{n} \otimes \mathbf{n}} \,\ud\mathbf{n}.
 \end{equation}
 Note that we do not assume $M \in \mathcal{Q}$ is diagonal. Assume that the supremum in \eqref{sup-problem} is achieved by the optimal dual tensor $M_Q \in \mathcal{Q}$. By the standard properties of the Legendre transform \eqref{partition2} (cf. \cite{MR690288,Villani2003}), we have the following dual relations for each $Q\in\mathcal{Q}_{phy}$:
 \begin{subequations}
 \begin{align}
 M_Q &=  D_Q f(Q),\\
 Q &=  D_M \ln Z(M_Q),\\
 D^2_Q f(Q) &=  \Big[ D^2_M \ln Z(M_Q) \Big]^{-1}.
 \end{align}
  \end{subequations}

 Let us denote the Hessian of the log-partition function evaluated at the optimum as $$\mathcal{H} \triangleq D^2_M \ln Z(M_Q).$$   Then the inequality \eqref{self-concordant} that  we want to prove  can be rewritten as:
\begin{equation}\label{partition8}
A : \mathcal{H}^{-1} A \geq C (M_Q : A)^2\qquad \forall A\in\mathcal{Q}.
\end{equation}
 Recall from \eqref{partition7} that the first derivative is an expectation:
\begin{equation}\label{partition9}
D_M \ln Z(M) = \frac{1}{Z(M)} \int_{\mathbb{S}^2}  \mathbf{n} \otimes \mathbf{n}\ e^{M : \mathbf{n} \otimes \mathbf{n}} \,\ud\mathbf{n} =  \langle \mathbf{n} \otimes \mathbf{n} \rangle_{\rho_M}. 
\end{equation}
where 
\begin{equation}\non
\rho_M(\n)\triangleq \frac{1}{Z(M)}  e^{M : \mathbf{n} \otimes \mathbf{n}}. \end{equation}
Taking the derivative of \eqref{partition9} yields the Hessian:
$$\mathcal{H} = D_M^2 \ln Z(M) = \langle \mathbf{n}^{\otimes 4} \rangle_{\rho_M} - \langle \mathbf{n} \otimes \mathbf{n} \rangle_{\rho_M} \otimes \langle \mathbf{n} \otimes \mathbf{n} \rangle_{\rho_M}.$$
 Evaluating the quadratic form of this Hessian on any test matrix $A \in \mathcal{Q}$ gives:
\[\begin{aligned}
A : \mathcal{H} A &= A : \langle \mathbf{n}^{\otimes 4} \rangle_{\rho_M} A - A : \Big( \langle \mathbf{n} \otimes \mathbf{n} \rangle_{\rho_M} \otimes \langle \mathbf{n} \otimes \mathbf{n} \rangle_{\rho_M} \Big) A \\
&= \langle (A : \mathbf{n} \otimes \mathbf{n})^2 \rangle_{\rho_M} - \langle A : \mathbf{n} \otimes \mathbf{n} \rangle_{\rho_M}^2.
\end{aligned}\label{partition11}\]
 This is exactly the variance of the continuous function $\mathbf{n} \mapsto A : \mathbf{n} \otimes \mathbf{n}$ over the sphere. This variance is zero if and only if $A : \mathbf{n} \otimes \mathbf{n}$ is a constant $c$, and this is only possible when $A = c \mathbb{I}_3$. However, this would lead to  $A = 0$ since $\tr(A)   = 0$. Therefore, for any non-zero $A \in \mathcal{Q}$, the variance is strictly positive. This proves that $\ln Z(M)$ is strictly convex over $\mathcal{Q}$, and the operator $\mathcal{H}$ is strictly positive-definite.
Consequently,  it naturally induces an inner product $$\langle X, Y \rangle_\mathcal{H} \triangleq X : \mathcal{H} Y.$$ We can rewrite the right-hand side of \eqref{partition8} using this inner product:
 $$M_Q : A = M_Q : (\mathcal{H} \mathcal{H}^{-1}) A = \mathcal{H} M_Q : \mathcal{H}^{-1} A = \langle M_Q, \mathcal{H}^{-1} A \rangle_\mathcal{H}.$$
 Now, apply the Cauchy-Schwarz inequality to this inner product:
 $$(M_Q : A)^2 \leq \langle M_Q, M_Q \rangle_\mathcal{H} \langle \mathcal{H}^{-1} A, \mathcal{H}^{-1} A \rangle_\mathcal{H}  = (M_Q : \mathcal{H} M_Q) (A : \mathcal{H}^{-1} A).$$
 Rearranging this gives:
 \begin{equation}\non
 A : \mathcal{H}^{-1} A \geq \left( \frac{1}{M_Q : \mathcal{H} M_Q} \right) (M_Q : A)^2.
 \end{equation}
 By \eqref{partition11}, we have 
$$M_Q : \mathcal{H} M_Q = \text{Var}_{\rho_Q}(M_Q : \mathbf{n} \otimes \mathbf{n}),$$
where $\rho_Q$ is (cf.  \eqref{rhoQ}) \begin{equation}\non
\rho_Q(\n)\triangleq \frac{1}{Z(M_Q)}  e^{M_Q : \mathbf{n} \otimes \mathbf{n}}.\end{equation}
So to establish \eqref{partition8}, we need only to show that the scalar quantity $M_Q : \mathcal{H} M_Q$  is uniformly bounded from above by some finite maximum value $V_{\max}$. This will prove the lemma with the uniform constant $C = 1/V_{\max} > 0$.
To this end, recall from \eqref{partition1} that 
 $$M_Q : \mathbf{n} \otimes \mathbf{n} = -\nu_1 x^2 - \nu_2 y^2 - (\mu_1 + \mu_2).$$ Since shifting a random variable by a constant does not change its variance, we have:
 $$M_Q : \mathcal{H} M_Q = \text{Var}_{\rho_Q} (-\nu_1 x^2 - \nu_2 y^2) = \text{Var}_{\rho_Q} (\nu_1 x^2 + \nu_2 y^2).$$

 \textbf{The Asymptotic Variance Bound:}

 We want to show that the variance of the energy $E = \nu_1 x^2 + \nu_2 y^2$ is bounded uniformly for all $\nu_1, \nu_2 > 0$. Because the variance is a continuous function strictly inside the physical domain, it can only blow up if one or both parameters approach infinity. Without loss of generality, assume $\nu_1 \geq \nu_2$. We analyze the two possible asymptotic regimes using Laplace's method. In the following, the notation $o(1)$ denotes an error term that vanishes to zero in the specified limit.

 \textbf{Case 1: Both $\nu_1 \to \infty$ and $\nu_2 \to \infty$.}
 The exponential weight $e^{-(\nu_1 x^2 + \nu_2 y^2)}$ decays infinitely fast everywhere except near the poles $(0,0,\pm 1)$. Near $z=1$, the sphere is asymptotically flat, and the area measure $\ud\mathbf{n}$ approaches the flat Lebesgue measure $\ud x \ud y$. Accounting for both poles, standard Laplace approximation states that the $k$-th moment of the energy approaches the Gaussian integral over the tangent plane up to a vanishing relative error:
 $$ \int_{\mathbb{S}^2} E^k e^{-E} \,\ud\mathbf{n} = \left[ 2 \int_{\mathbb{R}^2} (\nu_1 x^2 + \nu_2 y^2)^k e^{-(\nu_1 x^2 + \nu_2 y^2)} \,\ud x \ud y \right] (1 + o(1)) \quad \text{as } \min(\nu_1, \nu_2) \to \infty. $$
 We can evaluate the integral over $\mathbb{R}^2$ by making the substitution $u = \sqrt{\nu_1} x$ and $v = \sqrt{\nu_2} y$ with Jacobian $\ud x \ud y = \frac{1}{\sqrt{\nu_1 \nu_2}} \ud u \ud v$, and then converting to polar coordinates $(r, \theta)$:
\begin{align*}
 &\int_{\mathbb{R}^2} (\nu_1 x^2 + \nu_2 y^2)^k e^{-(\nu_1 x^2 + \nu_2 y^2)} \,\ud x \ud y \\&= \frac{1}{\sqrt{\nu_1 \nu_2}} \int_{\mathbb{R}^2} (u^2 + v^2)^k e^{-(u^2 + v^2)} \,\ud u \ud v \\
 &= \frac{1}{\sqrt{\nu_1 \nu_2}} \int_0^{2\pi} \ud\theta \int_0^\infty r^{2k} e^{-r^2} r \,\ud r \\
 &= \frac{\pi \Gamma(k+1)}{\sqrt{\nu_1 \nu_2}} = \frac{\pi k!}{\sqrt{\nu_1 \nu_2}}.
\end{align*}
 Substituting this exact result back into our asymptotic relation yields:
 $$ \int_{\mathbb{S}^2} E^k e^{-E} \,\ud\mathbf{n} = \frac{2\pi k!}{\sqrt{\nu_1 \nu_2}} (1 + o(1))\quad \text{as } \min(\nu_1, \nu_2) \to \infty.$$ 
  Evaluating the limits as $\nu_1, \nu_2 \to \infty$ yields:
\begin{align*}
 \lim_{\nu_1, \nu_2 \to \infty} \langle E \rangle_{\rho_Q}   = 1, \quad 
 \lim_{\nu_1, \nu_2 \to \infty} \langle E^2 \rangle_{\rho_Q}   = 2.
\end{align*}
 It then directly follows that the limit of the variance is strictly determined:
\begin{equation}\non
 \lim_{\nu_1, \nu_2 \to \infty} \text{Var}_{\rho_Q}(E) = \lim_{\nu_1, \nu_2 \to \infty} \big( \langle E^2 \rangle_{\rho_Q} - \langle E \rangle_{\rho_Q}^2 \big) =  1.
\end{equation}
 Because this limit exists and evaluates to a finite constant, the variance remains uniformly bounded for all sufficiently large $\nu_1, \nu_2$.

 \textbf{Case 2: $\nu_1 \to \infty$ while $\nu_2 = O(1)$ remains finite.}
 In this regime, the measure concentrates along the great circle $x=0$. Using spherical coordinates 
$$(x,y,z) = (\sin\theta\cos\phi, \sin\theta\sin\phi, \cos\theta),$$ 
the weight is dominated by values near $\phi = \pi/2$ and $3\pi/2$. Expanding around $\phi = \pi/2$ by setting $\phi = \pi/2 + \psi$ for small $\psi$, we have $x \approx -\psi\sin\theta$. The asymptotic energy is $E \approx \nu_1 \psi^2\sin^2\theta  + \nu_2 \sin^2\theta$, and the area element is $\ud\mathbf{n} = \sin\theta \ud\theta \ud\psi$. 

Although this linear expansion is only valid for small $\psi \in [-\epsilon, \epsilon]$, the exponential weight $e^{-\nu_1 \psi^2 \sin^2\theta}$ decays so rapidly as $\nu_1 \to \infty$ that the tails of the integral outside this neighborhood are exponentially small. Extending the limits of integration for $\psi$ to $\pm\infty$ introduces only a negligible error absorbed by the $(1+o(1))$ factor. Accounting for both sides of the circle, Laplace's method gives:
$$ \int_{\mathbb{S}^2} E^k e^{-E} \,\ud\mathbf{n} = 2 \left[ \int_0^\pi \sin\theta  \int_{-\infty}^\infty (\nu_1\psi^2 \sin^2\theta  + \nu_2 \sin^2\theta)^k e^{-\nu_1 \psi^2\sin^2\theta  - \nu_2 \sin^2\theta} \,\ud\psi \,\ud\theta\right](1 + o(1)) $$
as $\nu_1 \to \infty$. Substituting $u = \sqrt{\nu_1} \psi \sin\theta$ gives $\ud\psi = \frac{\ud u}{\sqrt{\nu_1} \sin\theta}$, and the singular $\sin\theta$ term   cancels the one in area element:
\begin{align*}
&  \int_0^\pi \sin\theta  \int_{-\infty}^\infty (\nu_1\psi^2 \sin^2\theta  + \nu_2 \sin^2\theta)^k e^{-\nu_1 \psi^2\sin^2\theta  - \nu_2 \sin^2\theta} \,\ud\psi \,\ud\theta \\
&= \frac{1}{\sqrt{\nu_1}} \int_0^\pi \int_{-\infty}^\infty (u^2 + \nu_2 \sin^2\theta)^k e^{-(u^2 + \nu_2 \sin^2\theta)} \,\ud u \,\ud\theta \triangleq \frac{I_k(\nu_2)}{\sqrt{\nu_1}}.
\end{align*}
Because $\nu_2$ is finite, $I_k(\nu_2)$ is a convergent, strictly positive, bounded integral independent of $\nu_1$. Substituting this back yields:
$$ \int_{\mathbb{S}^2} E^k e^{-E} \,\ud\mathbf{n} = \frac{2}{\sqrt{\nu_1}} I_k(\nu_2) (1+o(1))\text{ as }\nu_1 \to \infty.$$
 When computing the   moments, the $\frac{2}{\sqrt{\nu_1}}$ prefactors cancel entirely:
 $$ \lim_{\nu_1 \to \infty} \langle E^k \rangle_{\rho_Q} = \lim_{\nu_1 \to \infty} \frac{\frac{2}{\sqrt{\nu_1}} I_k(\nu_2) (1+o(1))}{\frac{2}{\sqrt{\nu_1}} I_0(\nu_2) (1+o(1))} = \frac{I_k(\nu_2)}{I_0(\nu_2)}. $$
 Consequently, the   variance  
 $$\text{Var}_{\rho_Q}(E) \xrightarrow{\nu_1 \to \infty}\frac{I_2(\nu_2)}{I_0(\nu_2)} - \left( \frac{I_1(\nu_2)}{I_0(\nu_2)} \right)^2,$$
 which is bounded by a constant depending only on $\nu_2$. 

 Since the exact variance approaches finite limits in all possible asymptotic regimes as $Q$ approaches the physical boundary, there exists a uniform global upper bound $V_{\max} < \infty$ over the entire physical domain $\mathcal{Q}_{phy}$. Setting $C = 1/V_{\max} > 0$ establishes the desired lower bound \eqref{self-concordant}, completing the proof.
 \end{proof}

 \section{Proof of Theorem \ref{main-theorem-1}: Construction of the solution}
 
To construct a solution satisfying the  estimates in Theorem \ref{main-theorem-1}, we follow the truncation scheme used  in \cite{MR3109430,MR3360743}.
 
When building weak solutions to the system, it is convenient to relax the strict physical constraint $Q\in \mathcal{Q}_{phy}$ during the approximate construction. For this reason, we regularize $f$ using the well-known Moreau-Yosida regularization from convex analysis (cf. \cite{MR1422252}), thereby extending the effective domain of $f$ to all of $\mathcal{Q}$. In particular, for $J \in\{1,2,3, \ldots\}$, we define $\widetilde{f}_J$ as:
$$
\tilde{f}_J(Q)\triangleq \min _{A \in \mathcal{Q}}\left(J|A-Q|^2+f(A)\right).
$$
The map $\widetilde{f}_J$ is not guaranteed to be smooth, even if $f$ is $C^{\infty}$ on its effective domain. To ensure its Hessian exists in the classical sense, we require a further mollification. Thus, for fixed $J$ and any $K \in\{1,2,3, \ldots\}$, we define $\widetilde{f}_{J, K} \in C^2(\mathbb{R}^{3\times 3})$ as:
$$
\tilde{f}_{J, K}(Q)\triangleq K^{9} \int_{\mathbb{R}^{3\times 3}} \tilde{f}_J(K(Q-R)) \Phi(R) \,\mathrm{d} R,
$$
where $\Phi \in C_c^{\infty}(\mathbb{R}^{3\times 3}; \mathbb{R}_{+})$ is a standard symmetric mollifier with unit mass $\int \Phi(R) \mathrm{d} R=1$. Finally, we extract the diagonal subsequence $\widetilde{f}_{N, N}$ and define the unified regularization of $f$ to be:
$$
f_N(Q)\triangleq \tilde{f}_{N, N}(Q)=N^{9} \int_{\mathbb{R}^{3\times 3}} \tilde{f}_N(N(Q-R)) \Phi(R) \,\mathrm{d} R.
$$

This choice of regularization is convenient because $f_N$ inherits many of the critical properties of $f$ and converges robustly to its limit. We quote the following established properties of the mollified potential:
 
\begin{proposition}\cite{MR3360743}\label{Moreau}
 For each $N \geq 1$, the regularization $f_N$ of the potential $f$ satisfies:
 \begin{itemize} 
\item $f_N$ is isotropic: $f_N(Q)=f_N(R^TQR)$ for each $R\in SO(3)$.
\item $f_N$ is both $C^{\infty}$ and strictly convex on $\mathcal{Q}$.
\item There is a uniform constant $c_0>0$ such that $-c_0 \leq f_N(X)$ for all $X \in \mathbb{R}^{3\times 3}$ and for all $N \geq 1$.
\item $f_N \leq f_{N+1} \leq f$ on $\mathbb{R}^{3\times 3}$ for $N \geq 1$, and $f_N\xrightarrow{N \rightarrow \infty} \infty $ uniformly on $\mathcal{Q} \backslash \mathcal{Q}_{phy}$.
\item  $\p^\alpha f_N \xrightarrow{N \rightarrow \infty}\p^\alpha  f$ uniformly on compact subsets of $\mathcal{Q}_{phy}$ for $|\alpha|\leq 2$.
\item Its classical first derivative satisfies
\[
c_N^1|X|-c_N^2 \leq\left|\left\langle D_Q f_N(X)\right\rangle\right| \leq C_N^1|X|+C_N^2
\]
for positive constants $c_N^i$ and $C_N^i$ depending on the parameter $N$.
\item The higher derivatives of $f_N$ satisfy the quadratic bound:
$$
\left|\left\langle\frac{\partial^k f_N}{\partial Q^k}(X)\right\rangle\right| \leq C_{N, k}^1|X|^2+C_{N, k}^2 \quad \text{for } k \geq 2.
$$
 \end{itemize} 
\end{proposition}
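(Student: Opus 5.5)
Proposition \ref{Moreau} is quoted from \cite{MR3360743}; the plan is to verify each item in the order listed. Each check uses only three ingredients. First, the convexity and rotation invariance of $f$, which is the Legendre transform \eqref{sup-problem} of the isotropic convex function $\ln Z$. Second, standard Moreau--Yosida facts. Third, properties of convolution with a symmetric mollifier.

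\textbf{Isotropy and convexity.} For $R\in SO(3)$, the substitution $A\mapsto R^TAR$ in the minimization, together with $f(R^TAR)=f(A)$, gives $\tilde f_J(R^TQR)=\tilde f_J(Q)$. Isotropy then passes to $f_N$ provided $\Phi$ is chosen $SO(3)$-invariant; for instance take $\Phi$ radial in the Frobenius norm. The function $\tilde f_J$ is an infimal convolution of the convex $f$ with the strictly convex $J|\cdot|^2$. Hence it is convex, finite everywhere, and $C^{1,1}$ with $D\tilde f_J(Q)=2J(Q-A_Q)$, where $A_Q$ is the unique minimizer.

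I expect the strict convexity claim to be the main obstacle, since the Moreau envelope is affine-like (its Hessian degenerates) far outside $\overline{\mathcal{Q}_{phy}}$. The first thing to try is to use that the effective domain of $f$ is bounded. Outside it, $\tilde f_J(Q)=J\,\mathrm{dist}(Q,\overline{\mathcal{Q}_{phy}})^2+f(\pi Q)$ up to adjustment, and this is strictly convex in the normal direction but possibly flat tangentially. If that fails, I would modify the construction by adding $\frac1N|Q|^2$. That changes nothing in the limits and guarantees strict convexity. Smoothness of $f_N$ is immediate from the convolution.

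\textbf{Lower bound and monotonicity.} The lower bound $f\ge -\ln(4\pi)$ follows from Jensen's inequality, and it is inherited by $\tilde f_J$ and by its averages. For monotonicity, $\tilde f_J\le \tilde f_{J+1}\le f$ is clear from the definition. The comparison after mollification needs the convexity inequality $\tilde f_J*\Phi_N\ge \tilde f_J$ (Jensen with a symmetric mollifier). Ordering in $N$ then requires controlling the mollification error, which is $O(N\cdot N^{-2})$ by the $C^{1,1}$ bound $2N$. I would absorb this error with a small shift such as $f_N-\frac{C}{N}$, adjusting constants accordingly.

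\textbf{Blow-up, convergence, and growth bounds.}
\begin{itemize}
\item Blow-up off $\mathcal{Q}_{phy}$: for $Q\notin\mathcal{Q}_{phy}$ the minimizer $A_Q$ must lie inside, and either $|A_Q-Q|$ is bounded below or $f(A_Q)$ is large. This forces $\tilde f_N(Q)\to\infty$, uniformly on sets at positive distance from the boundary.
\item $C^2$ convergence on compact subsets $K\subset\mathcal{Q}_{phy}$: there $f$ is smooth with bounded gradient. For $N$ large, $A_Q=Q-\frac{1}{2N}Df(A_Q)$ stays in a neighborhood of $K$, and the implicit function theorem gives $\tilde f_N\to f$ in $C^2$. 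Mollification preserves this convergence.
\item Growth bounds: $D\tilde f_N(X)=2N(X-A_X)$ with $A_X$ bounded, which gives linear upper and lower bounds on $|Df_N|$. Derivatives of order $k\ge2$ fall onto $\Phi$ with a factor $N^{k}$ and are therefore bounded, so the quadratic bound holds trivially.

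\end{itemize}
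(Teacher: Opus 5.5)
The paper quotes this proposition without proof, so I am checking your plan against the statement itself. Isotropy (you correctly require a conjugation-invariant $\Phi$), smoothness, the lower bound $-\ln 4\pi$, the $C^2$ convergence on compacts and the first-derivative bounds go through essentially as you sketch. Two steps do not close as written, and the step you flag as the main obstacle rests on a wrong formula.

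\textbf{Ordering in $N$.} You are right that the literal construction must be corrected. Since $0$ minimizes $f$, $\tilde f_N(0)=f(0)$ and $D\tilde f_N(0)=0$, so strict Jensen gives $f_N(0)>f(0)$. However, a shift $C/N$ fed by the crude bound $0\le f_N-\tilde f_N\le m_2/N$, with $m_2=\int|R|^2\Phi$, does not yield $f_N-\frac CN\le f_{N+1}-\frac{C}{N+1}$. That bound only gives $f_N-f_{N+1}\le m_2/N$, while consecutive shifts differ by $C/(N(N+1))$, and $\tilde f_{N+1}-\tilde f_N$ supplies no margin because it vanishes at $Q=0$. No choice of constants repairs this. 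The missing ingredient is control of the increment in the mollification scale. Put $h(\epsilon)=\int\tilde f_N(Q-\epsilon R)\Phi(R)\,\ud R$. Since $\int R\,\Phi=0$ and $D\tilde f_N$ is $2N$-Lipschitz, you get $|h'(\epsilon)|\le 2Nm_2\epsilon$. Together with $\tilde f_N\le\tilde f_{N+1}$ this gives $f_N-f_{N+1}\le h(1/N)-h(1/(N+1))\le 2m_2/(N(N+1))$. The shifted family $f_N-2m_2/N$ absorbs this exactly, and it still lies below $\tilde f_N\le f$. Alternatively, decouple the mollification scale $1/K_N$ from $J=N$ so that $\sum_N N/K_N^2<\infty$. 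Either way you are proving the statement for a modified $f_N$, and you should say so explicitly.

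\textbf{Strict convexity, blow-up, higher derivatives.} The obstacle you anticipate is not there, and the formula $\tilde f_J=J\,\mathrm{dist}(Q,\overline{\mathcal{Q}_{phy}})^2+f(\pi Q)$ is wrong. Since $f=+\infty$ on $\partial\mathcal{Q}_{phy}$, the minimizer $A_Q$ always lies in the open set $\mathcal{Q}_{phy}$. Because the effective domain is bounded, $\tilde f_J$ grows like $J|Q|^2$ rather than flattening. A midpoint argument suffices: for $Q_0\neq Q_1$ with minimizers $A_0,A_1$, test $\frac12(A_0+A_1)$ at $\frac12(Q_0+Q_1)$. Equality would force both $A_0-Q_0=A_1-Q_1$ and $A_0=A_1$. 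Quantitatively, $D^2\tilde f_J=(D^2f(A_Q)^{-1}+\frac1{2J})^{-1}$. Moreover $A:D^2f\,A\ge|A|^2$ for $A\in\mathcal{Q}$, because $\mathcal{H}$ is the covariance of $\n\otimes\n$, which has unit norm. So $\tilde f_J$ is even uniformly convex, and no $\frac1N|Q|^2$ term is needed.

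Your blow-up argument gives uniformity only at positive distance from the boundary, but the statement claims it on all of $\mathcal{Q}\setminus\mathcal{Q}_{phy}$, which contains $\partial\mathcal{Q}_{phy}$. Make your dichotomy quantitative. For $Q\notin\mathcal{Q}_{phy}$, any $A$ with $|A-Q|\le\eta$ lies within $\eta$ of $\partial\mathcal{Q}_{phy}$, where $f\ge\omega(\eta)\to\infty$; this holds because sublevel sets of the lower semicontinuous $f$ are compact subsets of $\mathcal{Q}_{phy}$. Hence $\tilde f_N(Q)\ge\min\{N\eta^2-\ln 4\pi,\ \omega(\eta)\}$, and taking $\eta=N^{-1/4}$ gives divergence uniform in $Q$.

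A minor point on higher derivatives: putting all $k\ge2$ derivatives on $\Phi$ gives only $O(|X|^2)$, since $\tilde f_N$ grows quadratically. To get boundedness, put two derivatives on $\tilde f_N$ and use $|D^2\tilde f_N|\le 2N$. The stated quadratic bound holds either way.
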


 Before proceeding to the Galerkin scheme, we rewrite the gradient flow using vector calculus notation and isolate its linear differential operator. Under the convention that $\div Q=\partial_j Q_{ij}=Q_{ij,j}$, we can write \eqref{grad flow E} as:
 \begin{align} 
\p_t Q &= L_1\Delta{Q} +(L_2+L_3) \nabla \div Q-\dfrac{L_2+L_3}{3}(\div \div Q) \mathbb{I}_3 \non\\
&\qquad - \Pi(D_Q f) + 2\alpha Q, \label{gradient-flow-Q}
\end{align}
 where $\Pi(M) = M - \frac{1}{3}\tr(M)\mathbb{I}_3$ denotes the projection onto the space of trace-free symmetric tensors.
 
\begin{lemma}\label{liu-wang}
 \cite{MR3927580} 
 The linear differential operator associated with the elastic energy is  
 \[\mathscr{A}Q\triangleq L_1\Delta{Q} +(L_2+L_3) \nabla \div Q-\dfrac{L_2+L_3}{3}(\div\div Q) \mathbb{I}_3.\]
 Under the assumption that $L_1>0$ and $L_1>|L_2+L_3|$, $\mathscr{A}$ is strongly elliptic and satisfies the strong Legendre condition.
\end{lemma}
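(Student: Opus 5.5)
The plan is to verify both claims directly on the symbol of $\mathscr{A}$ and on the quadratic form $\mathcal{G}$ in \eqref{elastic-energy}. On $\TT^n$ I will use periodicity and Plancherel so that everything reduces to a pointwise matrix inequality.

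\textbf{Step 1 (reduction of the energy).} For periodic $Q$, integrating by parts twice gives
$$\int_{\TT^n}\p_jQ_{ik}\,\p_kQ_{ij}\,\ud x=\int_{\TT^n}\p_kQ_{ik}\,\p_jQ_{ij}\,\ud x .$$
Hence
$$\mathcal{G}(\nabla Q)=\frac12\int_{\TT^n}\big(L_1|\nabla Q|^2+(L_2+L_3)|\div Q|^2\big)\,\ud x .$$
Taking the first variation of this form and projecting onto $\mathcal{Q}$ with $\Pi$ gives $-\mathscr{A}Q$. The trace-free correction $-\frac{L_2+L_3}{3}(\div\div Q)\mathbb{I}_3$ comes from $\Pi$, and symmetrization of $\nabla\div Q$ is automatic when the form is tested against symmetric $A$. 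So $\mathscr{A}$ is the operator of the symmetric bilinear form
$$a(Q,P)=\int_{\TT^n}\big(L_1\nabla Q:\nabla P+(L_2+L_3)\,\div Q\cdot\div P\big)\,\ud x .$$

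\textbf{Step 2 (symbol / Legendre--Hadamard).} The principal symbol acts on $A\in\mathcal{Q}$ for $\xi\in\RR^n$ by
$$\sigma(\xi)A=-\Big(L_1|\xi|^2A+(L_2+L_3)\,\Pi\big(\mathrm{sym}((A\xi)\otimes\xi)\big)\Big).$$
Since $A$ is symmetric, its quadratic form is
$$-A:\sigma(\xi)A=L_1|\xi|^2|A|^2+(L_2+L_3)|A\xi|^2 .$$
The key inequality is $|A\xi|^2\le\|A\|_{op}^2|\xi|^2\le\|A\|_F^2|\xi|^2$. It yields
$$-A:\sigma(\xi)A\ge\big(L_1-|L_2+L_3|\big)|\xi|^2|A|^2 .$$
When $L_2+L_3\ge0$ one can use the better bound $L_1|\xi|^2|A|^2$. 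By \eqref{coefficient-assumption} the constant $L_1-|L_2+L_3|$ is positive, which is strong ellipticity of $\mathscr{A}$.

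\textbf{Step 3 (strong Legendre / coercivity).} Expand $Q=\sum_k\hat Q(k)e^{2\pi i k\cdot x}$ with $\hat Q(k)\in\mathcal{Q}\otimes\mathbb{C}$. Plancherel gives
$$a(Q,Q)=4\pi^2\sum_k\Big(L_1|k|^2|\hat Q(k)|^2+(L_2+L_3)|\hat Q(k)k|^2\Big).$$
Applying the complex version of the bound in Step 2 to each mode gives
$$a(Q,Q)\ge(L_1-|L_2+L_3|)\,\|\nabla Q\|_{L^2}^2 .$$
This coercivity, equivalently $-\int\mathscr{A}Q:Q\ge\lambda\|\nabla Q\|^2_{L^2}$ with $\lambda=L_1-|L_2+L_3|$, is the form of the Legendre condition used in the energy estimates. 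Together with Step 2 it also gives the elliptic regularity $\|Q\|_{H^2}\lesssim\|\mathscr{A}Q\|_{L^2}+\|Q\|_{L^2}$, by the same Fourier computation applied to $|k|^2\hat Q(k)$.

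\textbf{Main obstacle.} The main point needing care is the bookkeeping in Step 1. One must check that the non-symmetric term $\p_j\p_kQ_{ik}$ and the trace correction really are the $\Pi$-projection of the variation. One must also check that the $L_2$ term may be traded for $|\div Q|^2$ only in the integrated, periodic sense. Pointwise, a general gradient tensor $P_{ij,k}$ does not satisfy the Legendre inequality with constant $L_1-|L_2+L_3|$. So I will state the strong Legendre condition in the integrated form of Step 3, plus the Legendre--Hadamard form of Step 2, rather than as a pointwise statement.
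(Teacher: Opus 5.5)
The paper does not prove this lemma; it only cites \cite{MR3927580}, so your argument has to stand on its own. Steps 1--3 are correct as computations. The double integration by parts, the symbol identity $-A:\sigma(\xi)A=L_1|\xi|^2|A|^2+(L_2+L_3)|A\xi|^2$, and the Plancherel bound all check out. Step 3 is also exactly what the paper uses later, e.g.\ in \eqref{time-coercive} and \eqref{galerkin6}.

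The gap is in the second half of the claim. For a constant-coefficient operator on $\TT^n$, the integrated inequality of Step 3 is equivalent to the Legendre--Hadamard condition of Step 2, since Plancherel reduces it mode by mode to the rank-one inequality. So you have proved strong ellipticity twice and relabeled the second copy as the strong Legendre condition. Your reason for abandoning the pointwise statement holds for the representative $L_1|\nabla Q|^2+(L_2+L_3)|\div Q|^2$ you chose in Step 1; for $L_2+L_3<0$ that density can even be indefinite. The same is true of the original density in \eqref{elastic-energy}. It is not, however, an obstruction for $\mathscr{A}$ itself, whose divergence-form coefficients are determined only modulo null Lagrangians.

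The missing idea is to move the null Lagrangian the other way. The expression $\p_jQ_{ik}\p_kQ_{ij}-\p_jQ_{ij}\p_kQ_{ik}$ has identically vanishing Euler--Lagrange expression. Hence $\mathscr{A}$ is equally the operator of the density $L_1|P|^2+(L_2+L_3)P_{ik,j}P_{ij,k}$, where $P_{ij,k}$ stands for $\p_kQ_{ij}$. For each $i$ set $(M^{(i)})_{kj}=P_{ik,j}$. Then $P_{ik,j}P_{ij,k}=\sum_i\tr\big((M^{(i)})^2\big)$ and $|\tr(M^2)|=|M^T:M|\le|M|^2$, so
\begin{equation*}
L_1|P|^2+(L_2+L_3)P_{ik,j}P_{ij,k}\ge\big(L_1-|L_2+L_3|\big)|P|^2
\end{equation*}
for every third-order tensor $P$, with no rank-one or gradient restriction. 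This is the strong Legendre condition in the usual pointwise sense, with the same constant you obtained.

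It also explains the form of the hypothesis. As your Step 2 notes, strong ellipticity needs only $L_1>0$ when $L_2+L_3\ge0$, so the restriction $L_1>L_2+L_3$ plays no role in your argument. It is, however, exactly what this pointwise bound uses.
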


\begin{proof}[Proof of Theorem \ref{main-theorem-1}]

 \textbf{Step 1: The Unified Regularized Galerkin Scheme.} Let $N \ge 1$ be a unified parameter. First, we replace the singular potential $f(Q)$ with its Moreau-Yosida mollification $f_N(Q)$. We define the corresponding regularized energy $\mathcal{E}_N(Q)$ whose variational derivative in $\mathcal{Q}$ is:
\begin{equation}\label{finite-dim2}
\delta \mathcal{E}_N(Q) = -\mathscr{A}Q + \Pi\left(D_Q f_N(Q)\right) - 2\alpha Q.
\end{equation}
 Let $V_N = \text{span}\{W_1, W_2, \dots, W_N\}$ be the finite-dimensional space spanned by the first $N$ $L^2$-orthonormal eigenfunctions of the Laplacian $-\Delta$ on the periodic domain $\mathbb{T}^n$. We seek an approximate solution $Q_N(t,x) = \sum_{k=1}^N c_k(t) W_k(x) \in V_N$ satisfying the Galerkin system for all test functions $W \in V_N$:
$$\int_{\mathbb{T}^n} \partial_t Q_N : W = \int_{\mathbb{T}^n} \mathscr{A} Q_N : W - \int_{\mathbb{T}^n} \Pi\left(D_Q f_N(Q_N)\right) : W + 2\alpha \int_{\mathbb{T}^n} Q_N : W.$$
Because $f_N$ is globally smooth and its derivatives are Lipschitz on bounded sets, the Picard-Lindel\"of theorem guarantees a unique local-in-time solution. Because $f_N$ is bounded below, this solution exists globally for $t \in [0, \infty)$.

 \textbf{Step 2: The Basic Energy Estimate.} We test the Galerkin system with $W = \partial_t Q_N \in V_N$. Since this exactly matches the time derivative of the regularized energy, we obtain the standard gradient flow dissipation:
$$\int_{\mathbb{T}^n} |\partial_t Q_N|^2 + \frac{d}{dt} \mathcal{E}_N(Q_N) = 0.$$
Integrating over $t\in [0,T]$ yields:
\begin{equation}\label{galerkin5}
\int_0^T \|\partial_t Q_N\|_{L^2}^2 \,\ud t + \mathcal{E}_N(Q_N(T)) \le \mathcal{E}_N(Q_{0N}) \le \mathcal{E}(Q_0).
\end{equation}
Because $f_N$ is bounded below by a constant $-c_0$, and the linear operator $-\mathscr{A}$ is strongly elliptic by Lemma \ref{liu-wang}, the energy functional bounds the $H^1$ norm. This grants us our first set of uniform bounds, independent of $N$:
\begin{equation}\label{galerkin1}
    \sup_{N \ge 1} \Big( \|Q_N\|_{L^\infty(0, T; H^1 )} + \|\partial_t Q_N\|_{L^2(0, T; L^2 )} \Big) \le C.
\end{equation}

 \textbf{Step 3: The Higher-Order Coercivity Estimate.}
We establish higher-order uniform bounds for $Q_N$ by exploiting the convexity of $f_N$. By \eqref{finite-dim2}, the approximate solution $Q_N \in V_N$ satisfies the finite-dimensional gradient flow: 
\begin{equation}\label{finite-dim1}
\partial_t Q_N = -\mathbb{P}_N \delta \mathcal{E}_N(Q_N)=\mathbb{P}_N \mathscr{A}Q_N - \mathbb{P}_N\Pi\left(D_Q f_N(Q_N)\right) + 2\alpha Q_N,
\end{equation}
where $\mathbb{P}_N$ is the $L^2$-orthogonal projection onto $V_N$. Because $V_N$ is closed under differentiation, the test function $W = -\Delta Q_N$ belongs to $V_N$. Using the fact that $\Delta Q_N$ is trace-free (allowing us to drop the projection $\Pi$) and applying the chain rule $\nabla(D_Q f_N(Q_N)) = D^2_Q f_N(Q_N) \nabla Q_N$, we integrate by parts:
\begin{align}\label{H1-estimate-packed}
\frac{1}{2} \frac{d}{dt} \int_{\TT^n} |\nabla Q_N|^2 &= - \int_{\TT^n} \p_t Q_N : \Delta Q_N \notag \\
&= - \int_{\TT^n} \mathscr{A} Q_N : \Delta Q_N + \int_{\TT^n} D_Q f_N(Q_N) : \Delta Q_N - 2\alpha \int_{\TT^n} Q_N : \Delta Q_N \notag \\
&= -\int_{\TT^n} \Big[ L_1 |\Delta Q_N|^2 + (L_2+L_3) |\nabla \div Q_N|^2 \Big] \notag \\
&\quad - \int_{\TT^n} D^2_Q f_N(Q_N) : (\nabla Q_N)^{\otimes 2} + 2\alpha \int_{\TT^n} |\nabla Q_N|^2.
\end{align}
Because $f_N$ is strictly convex (cf. Proposition \ref{Moreau}), its Hessian is positive semi-definite everywhere, meaning $D^2_Q f_N(Q_N) : (\nabla Q_N)^{\otimes 2} \ge 0$.

 To obtain regularity in time, we take the time derivative of the Galerkin equation. Because the initial data satisfies $\|\delta \mathcal{E}(Q_0)\|_{L^2} < \infty$, the initial time derivative is uniformly bounded in $L^2$:
 $$ \|\partial_t Q_N(0)\|_{L^2} = \|-\mathbb{P}_N \delta \mathcal{E}_N(Q_{0N})\|_{L^2} \le \|\delta \mathcal{E}(Q_0)\|_{L^2} < \infty. $$
Taking the time derivative of \eqref{finite-dim1} and testing with $\partial_t Q_N \in V_N$, the projection $\mathbb{P}_N$ acts as the identity. Applying the chain rule yields:
\begin{align}\label{finite-dim6}
\frac{1}{2} \frac{d}{dt} \int_{\TT^n} |\partial_t Q_N|^2 &= \int_{\TT^n} \partial_{tt} Q_N : \partial_t Q_N \notag \\
&= - \int_{\TT^n} \Big[ L_1 |\nabla \partial_t Q_N|^2 + (L_2+L_3) |\div \partial_t Q_N|^2 \Big] \notag \\
&\quad - \int_{\TT^n} D^2_Q f_N(Q_N) : (\partial_t Q_N)^{\otimes 2} + 2\alpha \int_{\TT^n} |\partial_t Q_N|^2.
\end{align}
Using the strong Legendre condition ($L_1 > |L_2+L_3|$, cf. Lemma \ref{liu-wang}), the spatial gradient terms are strictly coercive. Moving the negative dissipative terms to the left-hand side yields the differential inequality:
\begin{equation}\label{time-coercive}
\frac{1}{2} \frac{d}{dt} \|\partial_t Q_N\|_{L^2}^2 + C_0 \|\nabla \partial_t Q_N\|_{L^2}^2 + \int_{\TT^n} D^2_Q f_N(Q_N) : (\partial_t Q_N)^{\otimes 2} \le 2\alpha \|\partial_t Q_N\|_{L^2}^2.
\end{equation}

The differential inequalities \eqref{H1-estimate-packed} and \eqref{time-coercive} isolate the growth of the system to the $2\alpha$ mass terms. Applying Gronwall's inequality independently to both estimates on the interval $[0, T]$, we conclude that the norms cannot blow up. This grants us the following $N$-independent uniform bounds:
\[\begin{aligned}\label{galerkin3}
&\sup_{N \ge 1} \left( \|\nabla Q_N\|_{L^\infty(0, T; L^2 )}+ \|\nabla^2 Q_N\|_{L^2(0, T; L^2 )} \right)\\
&+ \sup_{N \ge 1} \left(\|\partial_t Q_N\|_{L^\infty(0, T; L^2 )} + \|\partial_t Q_N\|_{L^2(0, T; H^1 )} \right) \le C(T, Q_0).
\end{aligned}
 \]
Testing \eqref{finite-dim1} with  $\Delta Q_N$, and   utilizing  convexity of $f_N$ along with  \eqref{galerkin3}, we find
\begin{equation}\label{galerkin6}
\|\nabla^2 Q_N\|_{L^\infty(0, T; L^2 )}\leq   C \|\partial_t Q_N\|_{L^\infty(0, T; L^2 )}  \le C(T, Q_0).
\end{equation}

 \textbf{Step 4: Compactness and the Nonlinear Bound.} With bounds \eqref{galerkin1} and \eqref{galerkin3}, the Aubin--Lions--Simon compactness lemma allows us to extract a strongly convergent subsequence (still denoted $N$). As $N \to \infty$:
\begin{equation}\label{galerkin2}
\begin{aligned}
    Q_N &\to Q &&\text{strongly in } C([0,T]; L^2) \cap L^2(0, T; H^1) \text{ and pointwise a.e.}, \\
    \partial_t Q_N &\rightharpoonup \partial_t Q &&\text{weakly in } L^2(0, T; H^1) \text{ and weak-* in } L^\infty(0, T; L^2),\\
      \nabla^2 Q_N &\rightharpoonup \nabla^2 Q &&  \text{   weak-* in } L^\infty(0, T; L^2).
\end{aligned}
\end{equation}
These convergence results justify  \eqref{infty2-solu}. 
By \eqref{galerkin3} and \eqref{galerkin6}, we have 
\begin{equation}\label{galerkin4}
\left\| \mathbb{P}_N \Pi\left(D_Q f_N(Q_N)\right) \right\|_{L^\infty(0,T; L^2)} \le C.
\end{equation}

 \textbf{Step 5: The limit $Q$ is physical almost everywhere.}
We will show that the limit solution strictly satisfies $Q(t,x) \in \mathcal{Q}_{phy}$ almost everywhere. By \eqref{galerkin5},
$$\sup_{t \in [0,T]} \int_{\mathbb{T}^n} f_N(Q_N(t,x)) \,\ud x \le M$$
for some constant $M$ independent of $N$. Since the regularized potential $f_N(Q) \ge -c_0$, the shifted sequence $f_N(Q_N(t,x)) + c_0$ is strictly non-negative. Integrating over space and time yields:
$$\int_0^T \int_{\mathbb{T}^n} \Big( f_N(Q_N) + c_0 \Big) \,\ud x \ud t \le T(M + c_0 |\mathbb{T}^n|) < \infty.$$
Suppose, for the sake of contradiction, that there exists a set $S \subset \mathbb{T}^n \times (0,T)$ of strictly positive measure where the limit $Q(t,x)$ lies outside the physical domain $\mathcal{Q}_{phy}$. For almost every $(t,x) \in S$, the sequence $Q_N(t,x)$ converges to a target $Q$ outside $\mathcal{Q}_{phy}$. By Proposition \ref{Moreau}, $f_N \to \infty$ outside $\mathcal{Q}_{phy}$, so:
$$\liminf_{N \to \infty} \Big( f_N(Q_N(t,x)) + c_0 \Big) = +\infty \quad \text{for almost every } (t,x) \in S.$$
Because $f_N(Q_N) + c_0 \ge 0$, we apply Fatou's Lemma over the space-time domain:
$$\int_0^T \int_{\mathbb{T}^n} \liminf_{N \to \infty} \Big( f_N(Q_N) + c_0 \Big) \,\ud x \ud t \le \liminf_{N \to \infty} \int_0^T \int_{\mathbb{T}^n} \Big( f_N(Q_N) + c_0 \Big) \,\ud x \ud t < \infty.$$ 
Since the left side evaluates to $+\infty$ over the set $S$, this leaves us with a strict contradiction unless $S$ has measure zero. Therefore, $Q(t,x) \in \mathcal{Q}_{phy}$ almost everywhere.

 \textbf{Step 6: Limit of the regularized coercivity estimates.}
 We now pass to the limit $N \to \infty$ in the Hessian integrals to establish \eqref{hessian-space}. Returning to the coercive bounds \eqref{H1-estimate-packed} and \eqref{time-coercive}, we integrate over time $t \in [0,T]$. Because the boundary terms and non-Hessian integrals are uniformly bounded by the previous steps, we deduce that:
$$ \int_0^T \int_{\mathbb{T}^n} D^2_Q f_N(Q_N) : (\nabla Q_N)^{\otimes 2} \,\ud x \ud t \le C, \qquad \int_0^T \int_{\mathbb{T}^n} D^2_Q f_N(Q_N) : (\partial_t Q_N)^{\otimes 2} \,\ud x \ud t \le C. $$

 From Step 5, $Q(t,x)$ lies strictly in the interior of the physical domain almost everywhere. Since $f_N \to f$ in $C^2_{loc}(\mathcal{Q}_{phy})$ and $Q_N \to Q$ almost everywhere, the Hessians converge pointwise:
$$D^2_Q f_N(Q_N) \to D^2_Q f(Q) \quad \text{a.e. in } \mathbb{T}^n \times (0,T).$$
Because the matrices $D^2_Q f_N(Q_N)$ are positive semi-definite and converge pointwise, and $\nabla Q_N \rightharpoonup \nabla Q$ as well as $\partial_t Q_N \rightharpoonup \partial_t Q$ weakly in $L^2(L^2)$, we invoke standard weak lower semicontinuity theorems for integral functionals  to conclude:
$$\int_0^T \int_{\mathbb{T}^n} D^2_Q f(Q) : (\nabla Q)^{\otimes 2} \,\ud x \ud t \le \liminf_{N \to \infty} \int_0^T \int_{\mathbb{T}^n} D^2_Q f_N(Q_N) : (\nabla Q_N)^{\otimes 2} \,\ud x \ud t \le C,$$
$$\int_0^T \int_{\mathbb{T}^n} D^2_Q f(Q) : (\partial_t Q)^{\otimes 2} \,\ud x \ud t \le \liminf_{N \to \infty} \int_0^T \int_{\mathbb{T}^n} D^2_Q f_N(Q_N) : (\partial_t Q_N)^{\otimes 2} \,\ud x \ud t \le C.$$

 This confirms \eqref{hessian-space}. Passing the linear terms to the limit in the standard distributional sense establishes that \eqref{strong solu} is satisfied almost everywhere.
 
 Finally  
 by setting $A = \partial_i Q$ in \eqref{self-concordant}, we obtain:
 \begin{equation} \label{self-concordant1} 
 \partial_i Q : D^2_Q f(Q) \partial_i Q \geq  C |D_Q f(Q) : \p_i Q|^2=|\partial_i (f(Q))|^2 \qquad \text{ for a.e. }x.
 \end{equation}
This together with    \eqref{hessian-space} establishes   \eqref{wellposed1}.\end{proof}


 \section{Proof of Theorem \ref{thm-contact-set}: Analysis  of the contact set}
 In this section we give a   proof of Theorem \ref{thm-contact-set}.
  In two dimensions, $H^1$ spaces strictly fail to embed into $C^0$, allowing for potential logarithmic singularities. However, we will show that such a singularity is incompatible with the Hölder continuity of $Q$.

\subsection{The 2D case ($\Omega \subset \mathbb{R}^2$)}

Assume for the sake of contradiction that $0\in\mathcal{C}(Q)$. Then the minimum eigenvalue reaches its absolute limit: $\lambda_1(Q(0)) = -1/3$. 
Because $Q \in H^2(\Omega)$, Sobolev embedding in 2D gives $H^2(\Omega) \hookrightarrow C^{0,\gamma}(\Omega)$ for any Hölder exponent $\gamma \in (0,1)$. By the Lipschitz continuity of eigenvalues and the equivalence of norms, we have:
\[\begin{aligned}
0 \leq  \lambda_1(Q(x))+\frac{1}{3} &= | \lambda_1(Q(x))-\lambda_1(Q(0))| \\
 &\leq \|Q(x)-Q(0)\|_{\mathrm{op}} \leq |Q(x)-Q(0)|  \leq C|x|^\gamma.
\end{aligned} \label{lower-bound-new-1}\]
It follows from the asymptotic lower bound \eqref{lower-bound-new} that the scalar function $w(x) \triangleq f(Q(x))$ satisfies:
$$w(x) \geq -\frac{1}{2}\ln\Big(\lambda_1(Q(x))+\frac{1}{3}\Big) - C_1 \geq -\frac{1}{2}\ln(C|x|^\gamma) - C_1 = \frac{\gamma}{2} \big|\log |x|\big| - C_2.$$
Consequently, as $x\to 0$, $w(x)\to  +\infty$. The Hölder continuity provides a rigorous pointwise lower bound on the growth rate:
\begin{equation}\label{average-lower1-Q}
w(x) \geq \frac{\gamma}{2}\big|\log|x|\big| - C_2.
\end{equation}
Define the spherical average of $w$ at radius $r$:
$$\bar{w}(r) = \frac{1}{2\pi} \int_0^{2\pi} w(r, \theta) \,\ud\theta.$$
Since $w(x)$ satisfies the lower bound \eqref{average-lower1-Q} near the origin, its angular average inherits the exact same logarithmic growth:
\begin{equation}\label{average-lower2-Q}
\bar{w}(r) \geq \frac{\gamma}{2} |\log r| - C_2.
\end{equation}
Since $w \in H^1(B_R)$, it is absolutely continuous on almost all rays. We can relate the radial derivative of the average to the full spatial gradient:
$$\bar{w}'(r) = \frac{1}{2\pi} \int_0^{2\pi} \frac{\partial w}{\partial r}(r, \theta) \,\ud\theta.$$
Using the Cauchy-Schwarz inequality on the angular integral:
$$|\bar{w}'(r)|^2 \leq \frac{1}{2\pi} \int_0^{2\pi} \left| \frac{\partial w}{\partial r} \right|^2 \,\ud\theta \leq \frac{1}{2\pi} \int_0^{2\pi} |\nabla w|^2 \,\ud\theta.$$
Multiplying by $r$ and integrating from an arbitrarily small $r_1 > 0$ up to a fixed $R$:
$$\int_{r_1}^R |\bar{w}'(r)|^2 r \,\ud r \leq \frac{1}{2\pi} \int_{r_1}^R \int_0^{2\pi} |\nabla w|^2 r \,\ud\theta \,\ud r \leq \frac{1}{2\pi} \|\nabla f(Q) \|_{L^2(B_R)}^2 \triangleq  M < \infty.$$
Now we investigate how fast $\bar{w}(r_1)$ can possibly grow as $r_1 \to 0$ based on this $H^1$ energy bound. By Cauchy-Schwarz again:
\begin{align*}
\bar{w}(r_1) &= \bar{w}(R) - \int_{r_1}^R \bar{w}'(r) \,\ud r \\
 &\leq \bar{w}(R) + \int_{r_1}^R |\bar{w}'(r)| \sqrt{r} \frac{1}{\sqrt{r}} \,\ud r \\
 &\leq \bar{w}(R) + \left( \int_{r_1}^R |\bar{w}'(r)|^2 r \,\ud r \right)^{1/2} \left( \int_{r_1}^R \frac{1}{r} \,\ud r \right)^{1/2}\\
 &\leq \bar{w}(R) + \sqrt{M} (\log R - \log r_1)^{1/2}.
\end{align*}
As $r_1 \to 0$, this inequality indicates   that $\bar{w}(r_1)$ can grow  at most  at the rate of $\sqrt{|\log r_1|}$. However, \eqref{average-lower2-Q} guarantees that $\bar{w}(r_1)$  must grow at the strictly faster rate of $\frac{\gamma}{2} |\log r_1|$. 
Because $|\log r_1|$ grows infinitely faster than $\sqrt{|\log r_1|}$ as $r_1 \to 0$, the two bounds are fundamentally incompatible. The assumption $\lambda_1(Q(0))=-1/3$ directly contradicts the proven regularity $\nabla f(Q) \in L^2(\Omega)$. Therefore, the contact set $\mathcal{C}(Q)$ must be   empty in 2D.
 
\subsection{The 3D case ($\Omega \subset \mathbb{R}^3$)}
 In three dimensions, we can no longer guarantee the contact set is entirely empty, but we can strictly bound its size. We follow the dimensional strategy of \cite{MR3451881}, but we replace their distance/convexity argument (which relies on a simplified toy model) with our rigorous spherical averaging method to handle the true singular potential.

\begin{lemma}\label{lin-ref} \cite[Lem 2.1.1]{MR2030862}
 Let $g \in L_{\mathrm{loc}}^1 (\mathbb{R}^n )$. Suppose $0 \leq s<n$, and define the set of points where the fractional density is positive:
$$
M_s \triangleq \left\{x \in \mathbb{R}^n \;\middle|\; \limsup _{r \rightarrow 0} \frac{1}{r^s} \int_{B(x, r)}|g| \,\ud y > 0 \right\}.
$$
Then the $s$-dimensional Hausdorff measure is zero: $\mathcal{H}^s\left(M_s\right)=0$.
 (In standard Sobolev applications, this is applied with $s=n-2$ or $s=n-2+\epsilon$.)
\end{lemma}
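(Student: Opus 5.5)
The plan is the standard Vitali covering argument combined with absolute continuity of the integral. First decompose
$$M_s=\bigcup_{k\ge1} M_{s,k},\qquad M_{s,k}\triangleq\Big\{x \;\Big|\; \limsup_{r\to0} r^{-s}\int_{B(x,r)}|g|\,\ud y>\tfrac1k\Big\}.$$
Intersecting with balls $B_R$, it suffices by countable subadditivity to show $\mathcal H^s(M_{s,k}\cap B_R)=0$ for each fixed $k$ and $R$. Write $E\triangleq M_{s,k}\cap B_R$.

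\textbf{Step 1: $E$ has Lebesgue measure zero.} This uses $s<n$. By the Lebesgue differentiation theorem, for a.e.\ $x$ we have $|g(x)|<\infty$ and
$$\frac{1}{|B(x,r)|}\int_{B(x,r)}|g|\,\ud y\to|g(x)|.$$
Hence $\int_{B(x,r)}|g|\le C_x r^n$ for all small $r$. On the other hand, for $x\in E$ there are arbitrarily small $r$ with $\int_{B(x,r)}|g|>r^s/k$. Since $r^s/k\gg C_x r^n$ as $r\to0$, these two facts are incompatible. So every point of $E$ lies in the exceptional null set, and $|E|=0$.

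\textbf{Step 2: covering.} Fix $\varepsilon,\delta>0$. Since $|E|=0$ and $g\in L^1_{\mathrm{loc}}$, absolute continuity of the integral gives a bounded open set $U\supset E$ with $\int_U|g|<\varepsilon$. For each $x\in E$, choose $r_x<\delta$ such that $B(x,r_x)\subset U$ and
$$\int_{B(x,r_x)}|g|>r_x^s/k.$$
The radii are bounded, so the Vitali $5r$-covering lemma yields a countable disjoint subfamily $\{B(x_i,r_i)\}$ with $E\subset\bigcup_i B(x_i,5r_i)$. Then
$$\mathcal H^s_{10\delta}(E)\le C_s\sum_i(5r_i)^s\le C_s5^s k\sum_i\int_{B(x_i,r_i)}|g|\le C_s5^s k\int_U|g|<C_s5^sk\,\varepsilon .$$
Here disjointness of the balls and $B(x_i,r_i)\subset U$ justify the third inequality.

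\textbf{Step 3: conclusion.} Let $\varepsilon\to0$ to get $\mathcal H^s_{10\delta}(E)=0$. Then let $\delta\to0$ to get $\mathcal H^s(E)=0$, and hence $\mathcal H^s(M_s)=0$.

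The only genuinely delicate point is Step 1. There one must use $s<n$ to ensure $E$ is Lebesgue-null, which is what makes $\int_U|g|$ small for a suitable open neighbourhood $U$. Everything else is routine covering-argument bookkeeping.
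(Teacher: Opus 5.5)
Your argument is correct and is essentially the standard proof of this density estimate; the paper gives no proof of its own and simply cites Lin--Yang. As you do, that proof uses Lebesgue differentiation with $s<n$ to make each set $\{x : \limsup_{r\to0} r^{-s}\int_{B(x,r)}|g|\,\ud y>1/k\}$ Lebesgue-null, then applies a Vitali $5r$-covering by balls inside an open $U$ with small $\int_U|g|$ to force its $\mathcal{H}^s$-measure to vanish, and your bookkeeping (choosing $r_x<\delta$ with $B(x,r_x)\subset U$, disjointness giving $\sum_i\int_{B(x_i,r_i)}|g|\le\int_U|g|$, and countable subadditivity over $k$ and $R$) all checks out.
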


 Because $Q \in H^2(\Omega) \hookrightarrow C^{0,1/2}(\Omega)$ in 3D, we automatically have pointwise continuity everywhere: $\lim_{r \rightarrow 0}(  Q)_{x, r} =   Q(x)$, where 
 $$(g)_{x, r} \triangleq \fint_{B(x, r)} g(y) \, \mathrm{d}y = \frac{1}{\vert{}B(x, r)\vert{}} \int_{B(x, r)}   g(y) \, \mathrm{d}y.$$
  We define the following subsets of $\Omega$ based on Lebesgue differentiation properties:
 \begin{subequations}
\begin{align}
&U_1 \triangleq \left\{x \in \Omega \;\middle|\; \lim _{r \rightarrow 0} \fint_{B(x, r)} | \nabla Q - (\nabla Q )_{x, r} |^2 \,\ud y=0 \right\},\\
&U_2 \triangleq \left\{x \in \Omega \;\middle|\;  \lim _{r \rightarrow 0}(\nabla Q )_{x, r}=\nabla Q (x),\,|\nabla Q (x)|<\infty \right\}.
\end{align}
\end{subequations}
Using Poincaré's inequality on balls and the fact that $\nabla^2 Q \in L^2(\Omega)$, we apply Lemma \ref{lin-ref} (with $g = |\nabla^2 Q|^2$ and $s = n-2 = 1$) to find:
$$
\begin{aligned}
\mathcal{H}^{1}\left(\Omega \setminus U_1\right) &= \mathcal{H}^{1}\left(\left\{x \in \Omega \;\middle|\; \limsup _{r \rightarrow 0} \fint_{B(x, r)} | \nabla Q -  (\nabla Q )_{x, r} |^2 \,\ud y>0\right\}\right) \\
& \leq \mathcal{H}^{1}\left( \left\{x \in \Omega \;\middle|\; \limsup _{r \rightarrow 0} r^{-1} \int_{B(x, r)} | \nabla^2 Q |^2 \,\ud y>0\right\}\right)=0.
\end{aligned}
$$
 Since $\nabla Q\in H^1$, standard capacity theory indicates that its Lebesgue averages $(\nabla Q )_{x, r}$ converge pointwise outside a set $E$ with  ${\rm Cap_2}(E)=0$, cf. \cite[Thm 4.19]{MR3409135}.  This together with \cite[Thm 4.17]{MR3409135} implies $\mathcal{H}^{1+\epsilon}(\Omega \setminus U_2)=0$ for any $\epsilon>0$.

 To analyze the blowup of the entropy, we define a set $U_4$ based on the gradient of the potential $f(Q)$:
$$
U_4 = \left\{x \in \Omega \;\middle|\; \limsup _{r \rightarrow 0} r^{-1} \int_{B(x, r)} |\nabla f(Q )|^2 \,\ud y > 0 \right\}.
$$
Since $f(Q) \in H^1(\Omega)$, applying Lemma \ref{lin-ref} directly guarantees that $\mathcal{H}^1(U_4)=0$. 
Finally, we define the intersection of the contact set with our "well-behaved" Lebesgue points:
\begin{equation}\label{contact1}
U_3 \triangleq \left\{x \in U_1 \cap U_2 \mid f(Q(x))=\infty\right\} = U_1 \cap U_2 \cap \mathcal{C}(Q).
\end{equation}
 If we can prove the claim that $U_3 \subset U_4$, then the physical contact set $$\mathcal{C}(Q)\subset (\Omega \setminus U_1) \cup (\Omega \setminus U_2) \cup U_4.$$  Because all three of these covering sets have vanishing $\mathcal{H}^{1+\epsilon}$ measure, we can conclude that the contact set $\mathcal{C}(Q)$ has Hausdorff dimension at most 1.

\subsection{Proof of the claim $U_3\subset U_4$}

We proceed by contradiction. Suppose there exists a point $x_0 \in U_3$ such that $x_0 \notin U_4$. The Sobolev embedding $H^2(\Omega) \hookrightarrow C^{0,1/2}(\Omega)$ in 3D provides the sharp Hölder bound:
$$|Q(x) - Q(x_0)| \le C|x - x_0|^{1/2}.$$
Because $f(Q(x_0)) = \infty$, we know $\lambda_1(Q(x_0)) = -1/3$. It follows from the asymptotic lower bound \eqref{lower-bound-new} and the estimate in \eqref{lower-bound-new-1}  that the scalar function $w(x) \triangleq f(Q(x))$ satisfies 
\begin{align*}
w(x)&   \geq -\frac{1}{2}\ln\Big(\lambda_1(Q(x))+\frac{1}{3}\Big) - C_1 \\
&\ge -\frac{1}{2}\ln(C|x-x_0|^{1/2}) = \frac{1}{4} \big|\log|x-x_0|\big| - C_2
\end{align*}
near $x_0$.
Consequently, the spherical average $$\bar{w}(\rho) = \fint_{\partial B_\rho(x_0)} w(y) \,\ud S$$ must grow at least at the rate of:
\begin{equation} \label{average-lower-3d}
\bar{w}(\rho) \ge \frac{1}{4} |\log \rho| - C_2.
\end{equation}
To exploit the contradiction, we introduce the volume and surface gradient energies:
\begin{align*}
\varepsilon(r) &\triangleq  r^{-1}\int_{B_r(x_0)} |\nabla w|^2 \,\ud y,\\
\phi(\rho) &\triangleq \int_{\partial B_\rho(x_0)} |\nabla w|^2 \,\ud S.
\end{align*}
Because we assumed $x_0 \notin U_4$, the volumetric density must vanish in the limit:
\[ \varepsilon(r) \to 0 \quad \text{as } r \to 0. \] 
We calculate the radial derivative of the spherical average. Applying the Cauchy-Schwarz inequality  on the surface of the sphere yields:
$$\bar{w}'(\rho) = \fint_{\partial B_\rho} \frac{\partial w}{\partial \nu} \,\ud S \leq  \sqrt{\frac{1}{4\pi \rho^2} \int_{\partial B_\rho} |\nabla w|^2 \,\ud S} = \frac{1}{\sqrt{4\pi}} \frac{\sqrt{\phi(\rho)}}{\rho}.$$
We integrate this inequality from a very small radius $r_1$ up to $R$:
$$\bar{w}(r_1) - \bar{w}(R) \le \int_{r_1}^R |\bar{w}'(\rho)| \,\ud\rho \le \frac{1}{\sqrt{4\pi}} \int_{r_1}^R \frac{\sqrt{\phi(\rho)}}{\rho} \,\ud\rho.$$
To accurately evaluate this integral, we decompose the interval $[r_1, R]$ into dyadic shells. Let $R_k = 2^{-k}R$. On a single shell $A_k = [R_{k+1}, R_k]$, we apply the standard 1D Cauchy-Schwarz inequality to the integral:
\begin{align*}
\int_{R_{k+1}}^{R_k} \frac{\sqrt{\phi(\rho)}}{\rho} \,\ud\rho &\le \left( \int_{R_{k+1}}^{R_k} \phi(\rho) \,\ud\rho \right)^{1/2} \left( \int_{R_{k+1}}^{R_k} \frac{1}{\rho^2} \,\ud\rho \right)^{1/2}.
\end{align*}
Notice that the square of the first factor is bounded by the volume integral over the entire ball $B_{R_k}$, which is precisely $R_k \varepsilon(R_k)$. The square of the second factor evaluates exactly to $\frac{1}{R_{k+1}} - \frac{1}{R_k} = \frac{1}{R_k}$. Therefore, the product simplifies to:
\begin{equation}\non
\int_{R_{k+1}}^{R_k} \frac{\sqrt{\phi(\rho)}}{\rho} \,\ud\rho \le \sqrt{R_k \varepsilon(R_k)} \sqrt{\frac{1}{R_k}} = \sqrt{\varepsilon(R_k)}.
\end{equation}
We sum this over all $N$ dyadic shells between $r_1$ and $R$. The total number of shells is $N \approx \log_2(R/r_1) = \frac{1}{\ln 2} (\ln R - \ln r_1)$:
$$\bar{w}(r_1) \le \bar{w}(R) + \frac{1}{\sqrt{4\pi}} \sum_{k=0}^N \sqrt{\varepsilon(R_k)}.$$
Because $\varepsilon(r) \to 0$, for any arbitrarily small    $\delta > 0$, there exists an index $K$ such that for all $k \ge K$, we have $\sqrt{\varepsilon(R_k)} \le \delta$. We can split the sum at $K$:
$$\sum_{k=0}^N \sqrt{\varepsilon(R_k)} \le C_K + \sum_{k=K}^N \delta \le C_K + \delta N \le C_K + \frac{\delta}{\ln 2} \big|\log r_1\big|.$$
This implies that for any $\delta > 0$, the spherical average $\bar{w}(r_1)$ grows strictly slower than $\delta|\log r_1|$ as $r_1 \to 0$.  By choosing $\delta$ sufficiently small (e.g., $\delta < \frac{\ln 2}{4}$), this strictly contradicts the established lower bound \eqref{average-lower-3d} that $\bar{w}(\rho) \ge \frac{1}{4}|\log \rho|$.

Therefore, our initial assumption must be false. The point $x_0$ must belong to $U_4$, establishing that $U_3 \subset U_4$ and completing the proof.

 \section*{Acknowledgments}
 Yuning Liu acknowledges the support of the National Natural Science Foundation of China under Grant No.~12571224.
 Xiang Xu acknowledges the support of the National Natural Science Foundation 
under Grant No.~NSF DMS-2307525.


\end{document}